\newtheorem{thm}{Theorem}[section]
\newtheorem{prop}[thm]{Proposition}
\newtheorem{cor}[thm]{Corollary}
\newtheorem{lemma}[thm]{Lemma}
\theoremstyle{definition}
\newtheorem{defi}[thm]{Definition}
\newtheorem{rem}[thm]{Remark}
\newtheorem{ex}[thm]{Example}
\newcommand{\N}{\mathbb{N}}
\newcommand{\R}{\mathbb{R}}
\newcommand{\st}{\;:\;}
\newcommand{\correnti}{D}
\newcommand{\duale}[1] {{{#1}^{*}}}
\newcommand{\sspace}{\cdot}
\newcommand{\ssspace}{\cdot\cdot}
\newcommand{\formepm}[2]{\wedge^{#1,\,#2}_{+\,-}\,}
\newcommand{\correntipm}[2]{D_{#1,\,#2}^{+\,-}\,}
\newcommand{\correntipmalto}[2]{D^{#1,\,#2}_{+\,-}\,}
\DeclareMathOperator{\imm}{im}
\DeclareMathOperator{\de}{d}
\DeclareMathOperator{\id}{Id}
\DeclareMathOperator{\Vol}{Vol}
\DeclareMathOperator{\End}{End}
\DeclareMathOperator{\rk}{rk}
\newcommand{\Cpf}{$\mathcal{C}^\infty$-pure-and-full}
\newcommand{\Cf}{$\mathcal{C}^\infty$-full}
\newcommand{\Cp}{$\mathcal{C}^\infty$-pure}
\newcommand{\pf}{pure-and-full}
\newcommand{\f}{full}
\newcommand{\p}{pure}
\newcommand{\del}{\partial}
\newcommand{\kal}{K\"{a}hler}
\newcommand{\proj}{\pi}
\newcommand{\D}{$\mathbf{D}$}
\newcommand{\para}{\D}
\title[Cohomology of \para-complex manifolds]
{Cohomology of \para-complex manifolds}
\author{Daniele Angella}
\address{Dipartimento di Matematica ``Leonida Tonelli''\\
Universit\`{a} di Pisa \\
Largo Bruno Pontecorvo 5, 56127\\ 
Pisa, Italy}
\email{angella@mail.dm.unipi.it}
\author{Federico Alberto Rossi}
\address{Dipartimento di Matematica e Applicazioni\\
Universit\`{a} di Milano Bicocca\\
Via Cozzi 53, 20125 \\
Milano, Italy} 
\email{f.rossi46@campus.unimib.it}
\keywords{$\mathcal{C}^\infty$-pure-and-full structure; para-complex structure; $\mathbf{D}$-complex structure; $\mathbf{D}$-K\"ahler; nilmanifold; cohomology; deformation}
\thanks{The authors are partially supported by GNSAGA of INdAM}
\subjclass[2010]{53C15; 57T15; 32G07}
\begin{document}

\begin{abstract}
 In order to look for a well-behaved counterpart to Dolbeault cohomology in \para-complex geometry, we study the de Rham cohomology of an almost \para-complex manifold and its subgroups made up of the classes admitting invariant, respectively anti-invariant, representatives with respect to the almost \para-complex structure, miming the theory introduced by T.-J. Li and W. Zhang in \cite{li-zhang} for almost complex manifolds. In particular, we prove that, on a $4$-dimensional \para-complex nilmanifold, such subgroups provide a decomposition at the level of the real second de Rham cohomology group. Moreover, we study deformations of \para-complex structures, showing in particular that admitting \para-K\"ahler structures is not a stable property under small deformations.
\end{abstract}

\maketitle

\section*{Introduction}\label{sec:introduction}

\para-complex geometry arises naturally as a counterpart of complex geometry. Indeed, an \emph{almost \para-complex} structure (also called \emph{almost $\mathbf{D}$-structure}) on a manifold $X$ is an endomorphism $K$ of the tangent bundle $TX$ whose square $K^2$ is equal to the identity on $TX$ and such that the rank of the two eigenbundles corresponding to the eigenvalues $\left\{-1,\,1\right\}$ of $K$ are equi-dimensional (for an \emph{almost complex} structure $J\in\End(TX)$, one requires just that $J^2=-\id_{TX}$). Furthermore, many connections with other branches in Mathematics and Physics (in particular, with product structures, bi-Lagrangian geometry and optimal transport problem: see, e.g., \cite{kim-mccann-warren, harvey-lawson, cruceanu-fortuny-gadea} and the references therein) have recently been achieved.

A problem dealing with \emph{\para-complex} structures (that is, almost \para-complex structures satisfying an integrability condition, expressed in terms of their Nijenhuis tensor) is that one has to handle with semi-Riemannian metrics and not with Riemannian ones. In particular, one can try to reformulate a \emph{\para-Dolbeault cohomological theory} for \para-complex structures, in the same vein as Dolbeault cohomology theory for complex manifolds: but one suddenly finds that such \para-Dolbeault groups are in general not finite-dimensional (for example, yet the space of \para-holomorphic functions on the product of two equi-dimensional manifolds is not finite-dimensional). In fact, one loses the ellipticity of the second-order differential operator associated to such \para-Dolbeault cohomology. Therefore, it would be interesting to find some other (well-behaved) counterpart to \para-Dolbeault cohomology groups.

Recently, T.-J. Li and W. Zhang considered in \cite{li-zhang} some subgroups, called $H^+_J(X;\R)$ and $H^-_J(X;\R)$, of the real second de Rham cohomology group $H^2_{dR}(X;\R)$ of an almost complex manifold $\left(X,\,J\right)$, characterized by the type of their representatives with respect to the almost complex structure (more precisely, $H^+_J(X;\R)$, respectively $H^-_J(X;\R)$, contains the de Rham cohomology classes admitting a $J$-invariant, respectively $J$-anti-invariant, representative): in a sense, these subgroups behave as a ``generalization'' of the Dolbeault cohomology groups to the complex non-K\"ahler and non-integrable cases. In particular, these subgroups seem to be very interesting for $4$-dimensional compact almost complex manifolds and in studying relations between cones of metric structures, see \cite{li-zhang, draghici-li-zhang, angella-tomassini-2}. In fact, T. Dr\v{a}ghici, T.-J. Li and W. Zhang proved in \cite[Theorem 2.3]{draghici-li-zhang} that every almost complex structure $J$ on a $4$-dimensional compact manifold $X$ induces the decomposition $H^2_{dR}(X;\R)=H^+_J(X;\R)\oplus H^-_J(X;\R)$; the same decomposition holds true also for compact K\"ahler manifolds, thanks to Hodge decomposition (see, e.g., \cite{li-zhang}), while examples of complex and almost complex structures in dimension greater than $4$ for which it does not hold are known.

In this work, we reformulate T.-J. Li and W. Zhang's theory in the almost \para-complex case, constructing subgroups of the de Rham cohomology linked with the almost \para-complex structure. In particular, we are interested in studying when an almost \para-complex structure $K$ on a manifold $X$ induces the cohomological decomposition
$$ H^2_{dR}(X;\R) \;=\; H^{2\,+}_K(X;\R) \oplus H^{2\,-}_K(X;\R) $$
through the \para-complex subgroups $H^{2\,+}_K(X;\R)$, $H^{2\,-}_K(X;\R)$ of $H^2_{dR}(X;\R)$, made up of the classes admitting a $K$-invariant, respectively $K$-anti-invariant representative;
such almost \para-complex structures will be called \emph{\Cpf\ (at the $2$-nd stage)}, miming T.-J. Li and W. Zhang's notation in \cite{li-zhang}.

We prove some results and provide some examples showing that the situation, in the (almost) \para-complex case, is very different from the (almost) complex case.\\
In particular, in Example \ref{es 2.5} and in Example \ref{es 2.6}, we show that compact \emph{\para-K\"ahler} manifolds (that is, \para-complex manifolds endowed with a symplectic form that is anti-invariant under the action of the \para-complex structure) need not to satisfy the cohomological decomposition through the \para-complex subgroups we have introduced. Furthermore, Example \ref{es 2.8} shows a $4$-dimensional almost \para-complex nilmanifold that does not satisfy such a \para-complex cohomological decomposition, providing a difference with \cite[Theorem 2.3]{draghici-li-zhang} by T. Dr\v{a}ghici, T.-J. Li and W. Zhang.

With the aim to write a partial counterpart of \cite[Theorem 2.3]{draghici-li-zhang} in the \para-complex case, we prove the following result (see \S\ref{sec:definitions} for the definition of \emph{\pf} property).

\smallskip
\noindent {\bfseries Theorem \ref{thm:4-dim}.}
{\itshape
 Every invariant \para-complex structure on a $4$-dimensional nilmanifold is \Cpf\ at the $2$-nd stage and hence also \pf\ at the $2$-nd stage. 
}
\smallskip

\noindent Furthermore, we prove that the hypotheses on integrability, nilpotency and dimension can not be dropped out.

Lastly, we study explicit examples of deformations of \para-complex structures (see \cite{medori-tomassini, rossi} for a general account). In particular, we provide examples showing that the dimensions of the \para-complex subgroups of the cohomology can jump along a curve of \para-complex structures. Furthermore, we prove the following result, which provides another strong difference with respect to the complex case (indeed, recall that the property of admitting a K\"ahler metric is stable under small deformations of the complex structure, as proved by K. Kodaira and D.~C. Spencer in \cite{kodaira-spencer-3}).

\smallskip
\noindent {\bfseries Theorem \ref{thm:para-kahler-deformations}.}
{\itshape
 The property of being \para-\kal\ is not stable under small deformations of the \para-complex structure.
}

\medskip

The paper is organized as follows.\\
In Section \ref{sec:definitions}, we recall what a \para-complex structure is, we introduce the problem of studying \para-complex subgroups of cohomology and we introduce the concept of \Cpf\ \para-complex structure to mean a structure inducing a \para-complex decomposition in cohomology. We prove that every manifold given by the product of two equi-dimensional differentiable manifolds is \Cpf\ with respect to the natural \para-complex structure (see Theorem \ref{thm:products}).\\
In Section \ref{sec:invariant-cpf-para-complex-structures-on-solvmanifolds}, we introduce analogous definitions at the linear level of the Lie algebra associated to a (quotient of a) Lie group. In particular, we prove that, for a completely-solvable solvmanifold with an invariant \para-complex structure, the problem of the existence of a \para-complex cohomological decomposition reduces to such a (linear) decomposition at the level of its Lie algebra (see Proposition \ref{prop:linear-cpf-invariant-cpf}).\\
In Section \ref{sec:cpfness-solvmanifolds}, we prove Theorem \ref{thm:4-dim} and we give some examples showing that the hypotheses we assume can not be dropped out. Moreover, we provide examples showing that admitting \para-K\"ahler structures does not imply being \Cpf\ (see Proposition \ref{prop:para-kahler}).\\
In Section \ref{sec:deformations}, we study deformations of \para-complex structures, providing an example to prove Theorem \ref{thm:para-kahler-deformations} and showing that, in general, jumping for the dimensions of the \para-complex subgroups of cohomology can occur.

\medskip

\noindent{\itshape Acknowledgments.}
The authors would like to warmly thank Adriano Tomassini and Costantino Medori for their encouragement, their constant support and for many interesting conversations and useful remarks. The first author would like to thank also Serena Guarino Lo Bianco for a motivating conversation.

\section{\para-complex decompositions of cohomology and homology}\label{sec:definitions}
Let $X$ be a $2n$-dimensional compact manifold. Consider $K\in\End(TX)$ such that $K^2=\lambda\,\id_{TX}$ where $\lambda\in\left\{-1,\,1\right\}$: if $\lambda=-1$, we call $K$ an \emph{almost complex} structure; if $\lambda=1$, one gets that $K$ has eigenvalues $\left\{1,\,-1\right\}$ and hence there is a decomposition $TX=T^+X\oplus T^-X$ where $T^\pm X$ is given, point by point, by the eigenspace of $K$ corresponding to the eigenvalue $\pm 1$, where $\pm\in\left\{+,\,-\right\}$.

\noindent Recall that an \emph{almost \para-complex} structure (also called \emph{almost para-complex structure}) on $X$ is an endomorphism $K\in\End(TX)$ such that
$$ K^2\;=\;\id_{TX} \qquad \text{ and } \qquad \rk T^+X\;=\;\rk T^-X\;=\;\frac{1}{2}\dim X \;.$$
An almost \para-complex structure is said \emph{integrable} (and hence is called \emph{\para-complex}, or also \emph{para-complex}) if moreover
$$ \left[T^+X,\,T^+X\right] \;\subseteq\; T^+X \qquad \text{ and } \qquad \left[T^-X,\,T^-X\right] \;\subseteq\; T^-X \;,$$
(or, equivalently, if the \emph{Nijenhius tensor} of $K$, defined by
$$ N_K(\sspace,\ssspace) \;:=\; \left[\sspace,\,\ssspace\right]+\left[K\sspace,\,K\ssspace\right]-K\left[K\sspace,\,\ssspace\right]-K\left[\sspace,\,K\ssspace\right] \;, $$
vanishes).
We refer, e.g., to \cite{harvey-lawson, alekseevsky-medori-tomassini, CMMS04 Spec geom IeII, cruceanu-fortuny-gadea, andrada-barberis-dotti-ovando, andrada-salamon, rossi, rossi-2} and the references therein for more results about (almost) \para-complex structures and motivations for their study.

Note that, in a natural way, starting from $K\in\End(TX)$, one can define an endomorphism $K\in\End(T^*X)$ and hence a natural decomposition $T^*X=T^{*\,+}X\oplus T^{*\,-}X$ into the corresponding eigenbundles. Therefore, for any $\ell\in\N$, on the space of $\ell$-forms on $X$, we have the decomposition
\begin{eqnarray*}
\wedge^\ell X &:=& \wedge^\ell \left(T^*X\right) \;=\; \wedge^\ell \left(T^{*\,+}X\oplus T^{*\,-}X\right) \\[5pt]
&=& \bigoplus_{p+q=\ell} \wedge^p\left(T^{*\,+}X\right)\otimes\wedge^q\left(T^{*\,-}X\right) \;=:\; \bigoplus_{p+q=\ell} \formepm{p}{q}X
\end{eqnarray*}
where, for any $p,q\in\N$, the natural extension of $K$ on $\wedge^\bullet X$ acts on $\formepm{p}{q}X:=\wedge^p\left(T^{*\,+}X\right)\otimes\wedge^q\left(T^{*\,-}X\right)$ as $\left(+1\right)^p\left(-1\right)^q\id$. In particular, for any $\ell\in\N$,
$$ \wedge^\ell X \;=\; \underbrace{\bigoplus_{p+q=\ell,\;q\text{ even}} \formepm{p}{q}X}_{=:\; \wedge^{\ell\,+}_KX} \,\oplus\, \underbrace{\bigoplus_{p+q=\ell,\;q\text{ odd}} \formepm{p}{q}X}_{=:\; \wedge^{\ell\,-}_KX} $$
where
$$ K\lfloor_{\wedge^{\ell\,+}_KX} \;=\; \id \qquad \text{ and } \qquad K\lfloor_{\wedge^{\ell\,-}_KX} \;=\; -\id \;.$$

If an (integrable) \para-complex structure $K$ is given, then the exterior differential splits as
$$ \de \;=\; \del_+ + \del_- $$
where
$$ \del_+\;:=\;\proj_{\formepm{p+1}{q}}\colon \formepm{p}{q}\to\formepm{p+1}{q} $$
and
$$ \del_-\;:=\;\proj_{\formepm{p}{q+1}}\colon \formepm{p}{q}\to\formepm{p}{q+1} \;,$$
$\proj_{\formepm{r}{s}}\colon \formepm{\bullet}{\bullet}\to \formepm{r}{s}$ being the natural projection.
In particular, the condition $\de^2=0$ gives
$$
\left\{
\begin{array}{rcl}
 \del_+^2 &=& 0 \\[5pt]
 \del_+\del_-+\del_-\del_+ &=& 0 \\[5pt]
 \del_-^2 &=& 0
\end{array}
\right.
$$
and hence one could define the \emph{\para-Dolbeault cohomology} as
$$ H^{\bullet,\bullet}_{\del_+}(X;\R) \;:=\; \frac{\ker\del_+}{\imm\del_+} \;,$$
see \cite{krahe}. Unfortunately, one can not hope to adjust the Hodge theory of the complex case to this non-elliptic context. For example, take $X_1$ and $X_2$ two differentiable manifolds having the same dimension: then, $X_1\times X_2$ has a natural \para-complex structure whose eigenbundles decomposition corresponds to the decomposition $T\left(X_1\times X_2\right)=TX_1\oplus TX_2$; it is straightforward to compute that the space $H^{0,0}_{\del_+}\left(X_1\times X_2\right)$ of $\del_+$-closed functions on $X_1\times X_2$ is not finite-dimensional, being
$$ H^{0,0}_{\del_+}\left(X_1\times X_2\right) \;\simeq\; \mathcal{C}^\infty\left(X_2\right) \;.$$

We recall that a \emph{\para-K\"ahler} structure on a \para-complex manifold $\left(X,\,K\right)$ is the datum of a \emph{$K$-compatible} symplectic  form (that is, a $K$-anti-invariant symplectic form): this is the \para-complex counterpart to K\"ahler notion in the complex case.

\subsection{\para-complex subgroups of cohomology}
Consider $\left(X,\,K\right)$ a compact almost \para-complex manifold and let $2n:=\dim X$.

The problem we are considering is when the decomposition
$$ \wedge^\bullet X \;=\; \bigoplus_{p,q} \formepm{p}{q}X \;=\; \wedge^{\bullet\,+}_KX \oplus \wedge^{\bullet\,-}_KX $$
moves to cohomology.\\
Here and later, we mime T.-J. Li and W. Zhang (see, e.g., \cite{li-zhang}). For any $p,q,\ell\in\N$, we define
$$ H^{(p,q)}_K\left(X;\R\right) \;:=\; \left\{\left[\alpha\right]\in H^{p+q}_{dR}\left(X;\R\right)\st \alpha\in\formepm{p}{q}X\right\} $$
and
\begin{eqnarray*}
H^{\ell\,+}_{K}\left(X;\R\right) &:=& \left\{\left[\alpha\right]\in H^\ell_{dR}\left(X;\R\right) \st K\alpha=\alpha\right\} \\
 & = & \left\{\left[\alpha\right]\in H^\ell_{dR}\left(X;\R\right) \st \alpha\in\wedge^{\ell\,+}_KX\right\} \;,\\[5pt]
H^{\ell\,-}_{K}\left(X;\R\right) &:=& \left\{\left[\alpha\right]\in H^\ell_{dR}\left(X;\R\right) \st K\alpha=-\alpha\right\} \\
& = & \left\{\left[\alpha\right]\in H^\ell_{dR}\left(X;\R\right) \st \alpha\in\wedge^{\ell\,-}_KX\right\} \;.
\end{eqnarray*}

\begin{rem}
 Note that, if $K$ is integrable, then, for any $\ell\in\N$,
$$ H^{\ell\,+}_{K} \;=\; \bigoplus_{p+q=\ell,\;q\text{ even}} H^{(p,q)}_K(X;\R) $$
and
$$ H^{\ell\,-}_{K} \;=\; \bigoplus_{p+q=\ell,\;q\text{ odd}} H^{(p,q)}_K(X;\R) \;. $$
\end{rem}

We introduce the following definitions. (We refer, e.g., to \cite{li-zhang, draghici-li-zhang-survey} and the references therein for precise definitions, motivations and results concerning the notion of \Cpf ness in almost complex geometry.)

\begin{defi}
 For $\ell\in\N$, an almost \para-complex structure $K$ on the manifold $X$ is said to be
\begin{itemize}
 \item \emph{\Cp\ at the $\ell$-th stage} if
  $$ H^{\ell\,+}_K\left(X;\R\right)\,\cap\,H^{\ell\,-}_K\left(X;\R\right) \;=\; \left\{0\right\} \;;$$
 \item \emph{\Cf\ at the $\ell$-th stage} if
  $$ H^{\ell\,+}_K\left(X;\R\right)\,+\,H^{\ell\,-}_K\left(X;\R\right) \;=\; H^\ell_{dR}\left(X;\R\right) \;;$$
 \item \emph{\Cpf\ at the $\ell$-th stage} if it is both \Cp\ at the $\ell$-th stage and \Cf\ at the $\ell$-th stage, or, in other words, if it satisfies the cohomological decomposition
  $$ H^\ell_{dR}\left(X;\R\right) \;=\; H^{\ell\,+}_K\left(X;\R\right)\,\oplus\,H^{\ell\,-}_K\left(X;\R\right) \;.$$
 \end{itemize}
\end{defi}

\subsection{\para-complex subgroups of homology}
Consider $\left(X,\,K\right)$ a compact almost \para-complex manifold and let $2n:=\dim X$. Denote by $D_\bullet X:=:D^{2n-\bullet}X$ the space of currents on $X$, that is, the topological dual space of $\wedge^\bullet X$. Define the de Rham homology $H_\bullet(X;\R)$ of $X$ as the homology of the complex $\left(D_\bullet X,\,\de\right)$, where $\de\colon D_\bullet X\to D_{\bullet-1}X$ is the dual operator of $\de\colon\wedge^{\bullet-1} X\to\wedge^\bullet X$. Note that there is a natural inclusion $T_\sspace\colon\wedge^\bullet X\hookrightarrow D^\bullet X:=:D_{2n-\bullet}X$ given by
$$ \eta\mapsto T_\eta:=\int_X \sspace \wedge \eta \;.$$
In particular, one has that $\de T_\eta=T_{\de\eta}$. Moreover, one can prove that $H^\bullet_{dR}(X;\R)\simeq H_{2n-\bullet}(X;\R)$ (see, e.g., \cite{derham}).\\
The action of $K$ on $\wedge^\bullet X$ induces, by duality, an action on $D_\bullet X$ (again denoted by $K$) and hence a decomposition
$$ D_\ell X \;=\; \bigoplus_{p+q=\ell} \correntipm{p}{q}X \;;$$
note that, for any $p,q\in\N$, the space $\correntipm{p}{q}X:=:\correntipmalto{n-p}{n-q}$ is the dual space of $\formepm{p}{q}X$ and that $T_\sspace\colon\formepm{p}{q}\hookrightarrow\correntipmalto{p}{q}X$. As in the smooth case, we set
$$ \correnti_{\bullet\,+}^{K}X \;:=\; \bigoplus_{q\text{ even}}\correntipm{\bullet}{q}X \qquad \text{ and } \qquad \correnti_{\bullet\,-}^{K}X \;:=\; \bigoplus_{q\text{ odd}}\correntipm{\bullet}{q}X \;,$$
so $K\lfloor_{\correnti_{\bullet\,\pm}^{K}X}=\pm\id$ for $\pm\in\{+,\,-\}$ and
$$ \correnti_\bullet X \;=\; \correnti_{\bullet\,+}^{K}X \,\oplus\, \correnti_{\bullet\,-}^{K}X \;.$$

For any $p,q,\ell\in\N$, we define
$$ H_{(p,q)}^K\left(X;\R\right) \;:=\; \left\{\left[\alpha\right]\in H_{p+q}\left(X;\R\right)\st \alpha\in\correntipm{p}{q}X\right\} $$
and
\begin{eqnarray*}
H_{\ell\,+}^{K}\left(X;\R\right) &:=& \left\{\left[\alpha\right]\in H_\ell\left(X;\R\right) \st K\alpha=\alpha\right\} \;, \\[5pt]
H_{\ell\,-}^{K}\left(X;\R\right) &:=& \left\{\left[\alpha\right]\in H_\ell\left(X;\R\right) \st K\alpha=-\alpha\right\} \;. \\[5pt]
\end{eqnarray*}

We introduce the following definitions.

\begin{defi}
 For $\ell\in\N$, an almost \para-complex structure $K$ on the manifold $X$ is said to be
\begin{itemize}
 \item \emph{\p\ at the $\ell$-th stage} if
  $$ H_{\ell\,+}^K\left(X;\R\right)\,\cap\,H_{\ell\,-}^K\left(X;\R\right) \;=\; \left\{0\right\} \;;$$
 \item \emph{\f\ at the $\ell$-th stage} if
  $$ H_{\ell\,+}^K\left(X;\R\right)\,+\,H_{\ell\,-}^K\left(X;\R\right) \;=\; H_\ell\left(X;\R\right) \;;$$
 \item \emph{\pf\ at the $\ell$-th stage} if it is both \p\ at the $\ell$-th stage and \f\ at the $\ell$-th stage, or, in other words, if it satisfies the homological decomposition
  $$ H_\ell\left(X;\R\right) \;=\; H_{\ell\,+}^K\left(X;\R\right)\,\oplus\,H_{\ell\,-}^K\left(X;\R\right) \;.$$
 \end{itemize}
\end{defi}

\subsection{Linkings between \para-complex decompositions in homology and cohomology}
We use the same argument as in \cite{li-zhang} to prove the following linkings between \Cpf\ and \pf\ concepts.

\begin{prop}[{see \cite[Proposition 2.30]{li-zhang} and also \cite[Theorem 2.1]{angella-tomassini}}]\label{prop:cpf-pf}
 Let $\left(X,\,K\right)$ be a $2n$-dimensional compact almost \para-complex manifold. Then, for every $\ell\in\N$, the following implications hold:
$$
\xymatrix{
\text{\Cf\ at the }\ell\text{-th stage}\ar@{=>}[r]\ar@{=>}[d] & \text{\p\ at the }\ell\text{-th stage}\ar@{=>}[d] \\
\text{\f\ at the }\left(2n-\ell\right)\text{-th stage}\ar@{=>}[r] & \text{\Cp\ at the }\left(2n-\ell\right)\text{-th stage}
}
$$
\end{prop}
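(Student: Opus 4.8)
The plan is to deduce all four implications from a single source: the non-degenerate de Rham pairing between cohomology and homology, combined with the way the current map respects the $K$-grading. Throughout I pair $H^\ell_{dR}(X;\R)$ with $H_\ell(X;\R)$ by $([\alpha],[T])\mapsto T(\alpha)$ (evaluation of a closed current on a closed form), which is non-degenerate by de Rham duality, and I use the induced isomorphism $H^\ell_{dR}(X;\R)\simeq H_{2n-\ell}(X;\R)$ coming from $T_{\sspace}\colon\eta\mapsto T_\eta=\int_X\sspace\wedge\eta$, recalling $\de T_\eta=T_{\de\eta}$ so that this is a chain map.

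I would first record two structural facts. The first is an orthogonality at fixed degree: for every $m$, under $H^m_{dR}(X;\R)\times H_m(X;\R)\to\R$ one has $H^{m\,+}_K\perp H_{m\,-}^K$ and $H^{m\,-}_K\perp H_{m\,+}^K$. Indeed a current $T$ satisfies $KT=-T$ exactly when it annihilates $\wedge^{m\,+}_KX$ (unwind $(KT)(\alpha)=T(K\alpha)$ on the two eigenspaces); choosing a $K$-anti-invariant representative of a class in $H_{m\,-}^K$ and a $K$-invariant representative of a class in $H^{m\,+}_K$, their pairing vanishes, and symmetrically for the other pair. The second fact is the type behaviour of $T_{\sspace}$: for $\eta\in\wedge^{\ell\,+}_KX$ one finds $K\,T_\eta=(-1)^n\,T_\eta$ and for $\eta\in\wedge^{\ell\,-}_KX$ one finds $K\,T_\eta=(-1)^{n+1}\,T_\eta$, which follows from multiplicativity $K(\alpha\wedge\eta)=K\alpha\wedge K\eta$ together with the fact that $K$ acts as $(-1)^n$ on $\wedge^{2n}X=\formepm{n}{n}X$. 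Hence $T_{\sspace}$ sends $H^{\ell\,\pm}_K$ into $H_{(2n-\ell)\,\pm}^K$ when $n$ is even and into $H_{(2n-\ell)\,\mp}^K$ when $n$ is odd, bijectively matching the two labels in either case.

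The four arrows then follow. The two horizontal arrows use orthogonality and non-degeneracy. If $K$ is \Cf\ at the $\ell$-th stage, so $H^{\ell\,+}_K+H^{\ell\,-}_K=H^\ell_{dR}(X;\R)$, then a class in $H_{\ell\,+}^K\cap H_{\ell\,-}^K$ is orthogonal to both summands, hence to all of $H^\ell_{dR}(X;\R)$, hence zero; thus $K$ is \p\ at the $\ell$-th stage. The arrow ``\f\ at the $(2n-\ell)$-th stage $\Rightarrow$ \Cp\ at the $(2n-\ell)$-th stage'' is the identical argument with cohomology and homology exchanged at degree $m=2n-\ell$. The two remaining arrows transport along the isomorphism induced by $T_{\sspace}$: applying it to $H^{\ell\,+}_K+H^{\ell\,-}_K=H^\ell_{dR}(X;\R)$ and using the type behaviour yields $H_{(2n-\ell)\,+}^K+H_{(2n-\ell)\,-}^K=H_{2n-\ell}(X;\R)$, so $K$ is \f\ at the $(2n-\ell)$-th stage; dually, injectivity of $T_{\sspace}$ on $H^{2n-\ell}_{dR}(X;\R)$ sends $H^{(2n-\ell)\,+}_K\cap H^{(2n-\ell)\,-}_K$ into $H_{\ell\,+}^K\cap H_{\ell\,-}^K$, so if $K$ is \p\ at the $\ell$-th stage it is \Cp\ at the $(2n-\ell)$-th stage.

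The main difficulty is bookkeeping rather than conceptual: one must pin down the $K$-action on currents and on $T_{\sspace}$ precisely, in particular the global sign $(-1)^n$ in the type behaviour, which originates in $K$ acting by $(-1)^n$ on the top power $\wedge^{2n}X$. The key observation making the proof uniform in the dimension is that this sign only permutes the labels $+$ and $-$, and is therefore invisible to the statements ``the sum is everything'' (fullness) and ``the intersection is trivial'' (purity); once this is noted, the parity of $n$ never enters. The only remaining point is the classical one that the evaluation pairing descends to the (co)homology quotients and stays non-degenerate.
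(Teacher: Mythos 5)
Your proposal is correct and follows essentially the same route as the paper: the horizontal arrows via the non-degenerate form--current pairing and the orthogonality $H^{m\,\pm}_K\perp H_{m\,\mp}^K$, and the vertical arrows via the inclusion $T_{\sspace}$ together with $H^\ell_{dR}(X;\R)\simeq H_{2n-\ell}(X;\R)$. Your explicit computation of the sign $(-1)^n$ in $K\,T_\eta$, and the observation that it merely permutes the labels $+$ and $-$ and hence does not affect fullness or purity, is a careful spelling-out of what the paper compresses into the statement $H^{(p,q)}_K(X;\R)\hookrightarrow H_{(n-p,n-q)}^K(X;\R)$ and the word ``obvious''.
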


\begin{proof}
 Since, for every $p,q\in\N$, we have $H^{(p,q)}_K(X;\R)\stackrel{T_\sspace}{\hookrightarrow}H_{(n-p,n-q)}^K(X;\R)$ and $H^\ell_{dR}(X;\R)\simeq H_{2n-\ell}(X;\R)$, we get that the two vertical arrows are obvious.\\
To prove the horizontal arrows, consider $\left\langle \sspace,\,\ssspace\right\rangle$ the duality paring $D_{\ell}X\times \wedge^{\ell}X\to\R$ or the induced non-degenerate pairing $H^\ell_{dR}(X;\R)\times H_\ell(X;\R)\to \R$. Suppose now that $K$ is \Cf\ at the $\ell$-th stage; if there exists $\mathfrak{c}=\left[\gamma_+\right]=\left[\gamma_-\right]\in H_{\ell\,+}^K(X;\R)\cap H_{\ell\,-}^K(X;\R)$ with $\gamma_+\in D_{\ell\,+}^KX$ and $\gamma_-\in D_{\ell\,-}^KX$, then
$$ \left\langle H^\ell(X;\R),\, \mathfrak{c} \right\rangle \;=\; \left\langle H^{\ell\,+}_K(X;\R),\, \left[\gamma_-\right]\right\rangle + \left\langle H^{\ell\,+}_K(X;\R),\, \left[\gamma_-\right]\right\rangle \;=\; 0$$
and therefore $\mathfrak{c}=0$ in $H_\ell(X;\R)$; hence $K$ is \p\ at the $\ell$-th stage.\\
A similar argument proves the bottom arrow.
\end{proof}

\begin{cor}\label{cor:cf-every-stage}
 If the almost \para-complex structure $K$ on the manifold $X$ is \Cf\ at every stage, then it is \Cpf\ at every stage and \pf\ at every stage.
\end{cor}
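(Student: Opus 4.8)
The plan is to deduce the statement entirely from Proposition \ref{prop:cpf-pf} by chasing its diagram of implications, using the symmetry $\ell\mapsto 2n-\ell$ as $\ell$ ranges over $\left\{0,1,\ldots,2n\right\}$. Recall that, by definition, being \Cpf\ at every stage means being both \Cp\ and \Cf\ at every stage, and being \pf\ at every stage means being both \p\ and \f\ at every stage. Since \Cf ness at every stage is the hypothesis, it remains to establish \p ness, \f ness, and \Cp ness at every stage.

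First, since $K$ is \Cf\ at every stage, the top horizontal arrow of Proposition \ref{prop:cpf-pf} yields, for each $\ell$, that $K$ is \p\ at the $\ell$-th stage. Hence $K$ is \p\ at every stage, which furnishes the purity half of the \pf\ property. Next, the left vertical arrow gives that $K$ is \f\ at the $\left(2n-\ell\right)$-th stage for every $\ell$; since $\ell\mapsto 2n-\ell$ permutes $\left\{0,1,\ldots,2n\right\}$, this means precisely that $K$ is \f\ at every stage. Combining the two, $K$ is \pf\ at every stage.

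Finally, to prove that $K$ is \Cp\ at every stage, I would compose two arrows of the diagram: the left vertical arrow gives fullness at the $\left(2n-\ell\right)$-th stage, and the bottom horizontal arrow then gives \Cp ness at the $\left(2n-\ell\right)$-th stage; by the same index symmetry, $K$ is \Cp\ at every stage. (Equivalently, one may compose the already-established \p ness at the $\ell$-th stage with the right vertical arrow.) Since the hypothesis already provides \Cf ness at every stage, $K$ is \Cpf\ at every stage, completing the proof.

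I expect no genuine obstacle here: the whole argument is bookkeeping within the four implications of Proposition \ref{prop:cpf-pf}. The only point requiring a moment's care is to observe that quantifying over \emph{all} stages neutralizes the index shift $\ell\mapsto 2n-\ell$ appearing in three of those arrows, so that each derived property holds at every stage rather than merely at the reflected index.
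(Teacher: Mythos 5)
Your proposal is correct and is precisely the argument the paper intends: the corollary is stated without proof immediately after Proposition \ref{prop:cpf-pf} because it follows by exactly the diagram chase you describe, with the observation that quantifying over all stages absorbs the reflection $\ell\mapsto 2n-\ell$. No gaps.
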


In particular, note that, on a compact $4$-dimensional manifold, being \Cf\ at the $2$-nd stage implies being \Cp\ at the $2$-nd stage.

\subsection{\para-complex decompositions in (co)homology for product manifolds}
Recall that, given $X_1$ and $X_2$ two differentiable compact manifolds with $\dim X_1=\dim X_2=:n$, the product $X_1\times X_2$ inherits a natural \para-complex structure $K$, given by the decomposition
$$ T\left(X_1\times X_2\right) \;=\; TX_1 \,\oplus\, TX_2 \;;$$
in other words, $K$ acts as $\id$ on $X_1$ and $-\id$ on $X_2$. For any $\ell\in\N$, using the K\"unneth formula, one gets
\begin{eqnarray*}
H^\ell_{dR}\left(X_1\times X_2;\R\right) &\simeq& \bigoplus_{p+q=\ell} H^p\left(X_1;\R\right)\,\otimes\, H^q\left(X_2;\R\right) \\[10pt]
&=& \underbrace{\left(\bigoplus_{p+q=\ell,\;q\text{ even}} H^p\left(X_1;\R\right)\,\otimes\, H^q\left(X_2;\R\right)\right)}_{\subseteq H^{\ell\,+}_K\left(X_1\times X_2;\,\R\right)}\\[5pt]
&&\oplus\,\underbrace{\left(\bigoplus_{p+q=\ell,\;q\text{ odd}} H^p\left(X_1;\R\right)\,\otimes\, H^q\left(X_2;\R\right)\right)}_{\subseteq H^{\ell\,-}_K\left(X_1\times X_2;\,\R\right)}\\[10pt]
&\subseteq& H^{\ell\,+}_K\left(X_1\times X_2;\R\right) \,+\, H^{\ell\,-}_K\left(X_1\times X_2;\R\right) \;.
\end{eqnarray*}

\noindent Therefore, using also Corollary \ref{cor:cf-every-stage}, one gets the following result (compare it with \cite[Proposition 2.6]{draghici-li-zhang-survey}).

\begin{thm}\label{thm:products}
 Let $X_1$ and $X_2$ be two equi-dimensional compact manifolds. Then the natural \para-complex structure on the product $X_1\times X_2$ is \Cpf\ at every stage and \pf\ at every stage.
\end{thm}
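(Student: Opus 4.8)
The plan is to leverage Corollary \ref{cor:cf-every-stage}, which reduces the entire statement to verifying that the natural \para-complex structure $K$ on $X_1\times X_2$ is \Cf\ at every stage; once \Cf ness is established at every stage, the corollary immediately delivers both \Cpf ness and \pf ness at every stage. So the whole argument funnels into showing that, for every $\ell\in\N$,
$$ H^{\ell\,+}_K\left(X_1\times X_2;\R\right)\,+\,H^{\ell\,-}_K\left(X_1\times X_2;\R\right) \;=\; H^\ell_{dR}\left(X_1\times X_2;\R\right) \;. $$

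The key computation is already carried out in the display preceding the theorem statement, so the proof is essentially an assembly step. First I would recall that the K\"unneth isomorphism writes $H^\ell_{dR}(X_1\times X_2;\R)$ as the direct sum over $p+q=\ell$ of $H^p(X_1;\R)\otimes H^q(X_2;\R)$, and that a class in the $(p,q)$-summand is represented by $\pi_1^*\alpha\wedge\pi_2^*\beta$ with $\alpha$ an $\ell$-form on $X_1$ (pulled back along the projection) and $\beta$ a form on $X_2$. Because the eigenbundle splitting is exactly $T(X_1\times X_2)=TX_1\oplus TX_2$, such a representative lies in $\formepm{p}{q}(X_1\times X_2)$, hence in $\wedge^{\ell\,+}_K$ when $q$ is even and in $\wedge^{\ell\,-}_K$ when $q$ is odd. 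This is the content of the two underbraces in the preceding display: the even-$q$ part of the K\"unneth decomposition lands in $H^{\ell\,+}_K$ and the odd-$q$ part lands in $H^{\ell\,-}_K$.

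Summing these two inclusions over all $p+q=\ell$ shows that the entire K\"unneth decomposition, and therefore all of $H^\ell_{dR}(X_1\times X_2;\R)$, is contained in $H^{\ell\,+}_K+H^{\ell\,-}_K$; the reverse containment is trivial. This establishes \Cf ness at the arbitrary stage $\ell$, and since $\ell$ was arbitrary, $K$ is \Cf\ at every stage. Invoking Corollary \ref{cor:cf-every-stage} then yields the conclusion that $K$ is \Cpf\ at every stage and \pf\ at every stage, completing the proof.

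The only point requiring a little care, and the closest thing to an obstacle, is the observation that each K\"unneth summand really does possess a representative of pure bidegree with respect to the splitting $T^{*\,+}(X_1\times X_2)\oplus T^{*\,-}(X_1\times X_2)$: this rests on the fact that pullbacks of forms from $X_1$ are sections of $\wedge^\bullet(T^{*\,+})$ and pullbacks from $X_2$ are sections of $\wedge^\bullet(T^{*\,-})$, which is precisely how the product \para-complex structure is defined. Everything else is a direct invocation of K\"unneth and of the already-proved corollary, so I would keep the write-up short, essentially pointing to the displayed computation and then citing Corollary \ref{cor:cf-every-stage}.
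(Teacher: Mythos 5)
Your proposal is correct and follows exactly the paper's own argument: the K\"unneth decomposition displayed before the theorem shows that every class lies in $H^{\ell\,+}_K+H^{\ell\,-}_K$, i.e.\ $K$ is \Cf\ at every stage, and Corollary \ref{cor:cf-every-stage} then gives \Cpf ness and \pf ness at every stage. Nothing to add.
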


\section{Invariant \para-complex structures on solvmanifolds}\label{sec:invariant-cpf-para-complex-structures-on-solvmanifolds}
\subsection{Invariant \para-complex structures on solvmanifolds}
Let $X:=:\left.\Gamma\right\backslash G$ be a $2n$-dimensional \emph{solvmanifold} (respectively, \emph{nilmanifold}), that is, a compact quotient of a connected simply-connected solvable (respectively, nilpotent) Lie group $G$ by a co-compact discrete subgroup $\Gamma$. Set $\left(\mathfrak{g},\left[\sspace,\ssspace\right]\right)$ the Lie algebra that is naturally associated to the Lie group $G$; given a basis $\left\{e_1,\ldots,e_{2n}\right\}$ of $\mathfrak{g}$, the Lie algebra structure of $\mathfrak{g}$ is characterized by the \emph{structure constants} $\left\{c_{\ell m}^k\right\}_{\ell,m,k\in\{1,\ldots,2n\}}\subset\R$ such that, for any $k\in\left\{1,\ldots,2n\right\}$,
$$ \de_{\mathfrak{g}} e^k =: \sum_{\ell,m} c_{\ell m}^k \, e^\ell\wedge e^m \;, $$
where $\left\{e^1,\ldots,e^{2n}\right\}$ is the dual basis of $\duale{\mathfrak{g}}$ of $\left\{e_1,\ldots,e_{2n}\right\}$ and $\de_{\mathfrak{g}}:=:\de\colon\duale{\mathfrak{g}}\to \wedge^2\duale{\mathfrak{g}}$ is defined by
$$ \duale{\mathfrak{g}} \; \ni \; \alpha \mapsto \de_{\mathfrak{g}}\alpha(\sspace,\,\ssspace):=-\alpha\left(\left[\sspace,\,\ssspace\right]\right) \;\in\; \wedge^2\duale{\mathfrak{g}} \;.$$

Recall that a \emph{linear almost \para-complex} structure on $\mathfrak{g}$ is given by an endomorphism $K\in\End(\mathfrak{g})$ such that $K^2=\id_\mathfrak{g}$ and the eigenspaces $\mathfrak{g}^+$ and $\mathfrak{g}^-$ corresponding to the eigenvalues $1$ and $-1$ respectively of $K$ are equi-dimensional, i.e., $\dim_\R\mathfrak{g}^+=\dim_\R\mathfrak{g}^-=\frac{1}{2}\dim_\R\mathfrak{g}$.\\
Moreover, recall that a linear almost \para-complex structure on $\mathfrak{g}$ is said to be \emph{integrable} (and hence it is called a \emph{linear \para-complex} structure on $\mathfrak{g}$) if $\mathfrak{g}^+$ and $\mathfrak{g}^-$ are Lie-subalgebras of $\mathfrak{g}$, i.e.
$$ \left[\mathfrak{g}^+,\,\mathfrak{g}^+\right] \;\subseteq\; \mathfrak{g}^+ \qquad \text{ and }\qquad \left[\mathfrak{g}^-,\,\mathfrak{g}^-\right] \;\subseteq\; \mathfrak{g}^- \;.$$

Note that a $G$-invariant almost \para-complex structure on $X$ (that is, a \para-complex structure on $X$ induced by a \para-complex structure on $G$ which is invariant under the left-action of $G$ on itself given by translations) is determined by a linear almost \para-complex structure on $\mathfrak{g}$, equivalently, it is defined by the datum of two subspaces $\mathfrak{g}^+$ and $\mathfrak{g}^-$ of $\mathfrak{g}$ such that
$$ \mathfrak{g}\;=\; \mathfrak{g}^+\oplus\mathfrak{g}^-\quad \text{ and }\quad \dim_\R\mathfrak{g}^+=\dim_\R\mathfrak{g}^-=\frac{1}{2}\,\dim_\R\mathfrak{g} \;;$$
indeed, one can define $K\in\End(\mathfrak{g})$ as $K\lfloor_{\mathfrak{g}^+}=\id$ and $K\lfloor_{\mathfrak{g}^-}=-\id$ and then 
$K\in\End(TX)$ by translations. Note that the almost \para-complex structure $K$ on $X$ is integrable if and only if the linear almost \para-complex structure $K$ on $\mathfrak{g}$ is integrable.

\bigskip

\noindent {\itshape Notation.} To shorten the notation, we will refer to a given solvmanifold $X:=:\left.\Gamma\right\backslash G$ writing the structure equations of its Lie algebra: for example, writing
$$ X \;:=\; \left(0^4,\; 12+34\right)\;, $$
we mean that there exists a basis of the naturally associated Lie algebra $\mathfrak{g}$, let us say $\left\{e_1,\,\ldots,\,e_6\right\}$, whose dual will be denoted by $\left\{e^1,\,\ldots,\,e^6\right\}$ and with respect to which the structure equations are
$$
\left\{
\begin{array}{rcl}
 \de e^1 &=& \de e^2 \;=\; \de e^3 \;=\; \de e^4 \;=\; 0 \\[5pt]
 \de e^5 &=& e^{1}\wedge e^{2} \;=:\; e^{12} \\[5pt]
 \de e^6 &=& e^{1}\wedge e^{3} \;=:\; e^{13}
\end{array}
\right. \;,
$$
where we also shorten $e^{AB}:=e^A\wedge e^B$.
Recall that, by \cite{malcev}, given a nilpotent Lie algebra $\mathfrak{g}$ with {\itshape rational} structure constants, then the connected simply-connected Lie group $G$ naturally associated to $\mathfrak{g}$ admits a co-compact discrete subgroup $\Gamma$, and hence there exists a nilmanifold $X:=\left.\Gamma\right\backslash G$ whose Lie algebra is $\mathfrak{g}$.\\
With respect to the given basis $\left\{e_j\right\}_j$, writing that the (almost) \para-complex structure $K$ is defined as
$$ K \;:=\; \left(-\;+\;+\;-\;-\;+\right) $$
we mean that
$$ \mathfrak{g}^+ \;:=\; \R\left\langle e_2,\;e_3,\; e_6\right\rangle \qquad \text{ and }\qquad \mathfrak{g}^- \;:=\; \R\left\langle e_1,\;e_4,\; e_5\right\rangle \;.$$
Moreover, in writing the cohomology of $X$ (which is isomorphic to the cohomology of the complex $\left(\wedge^\bullet\duale{\mathfrak{g}},\,\de_\mathfrak{g}\right)$ if $X$ is completely-solvable, see \cite{hattori}), we list the harmonic representatives with respect to the invariant metric $g:=\sum_\ell e^\ell\odot e^\ell$ instead of their classes.\\
Dealing with \emph{invariant} objects on a $X$, we mean objects induced by objects on $G$ which are invariant under the left-action of $G$ on itself given by translations.

\subsection{The cohomology of completely-solvable solvmanifolds and its \para-complex subgroups}
Recall that the translation induces an isomorphism of differential algebras between the space of forms on $\duale{\mathfrak{g}}$ and the space $\wedge^\bullet_{\text{inv}}X$ of invariant differential forms on $X$:
$$ \left(\wedge^\bullet\duale{\mathfrak{g}},\,\de_{\mathfrak{g}}\right) \stackrel{\simeq}{\longrightarrow} \left(\wedge^\bullet_{\text{inv}}X,\,\de\lfloor_{\wedge^\bullet_{\text{inv}}X}\right) \;;$$
moreover, by Nomizu's and Hattori's theorems (see \cite{nomizu, hattori}), if $X$ is a nilmanifold or, more in general, a completely-solvable solvmanifold, then the natural inclusion
$$ \left(\wedge^\bullet_{\text{inv}}X,\,\de\lfloor_{\wedge^\bullet_{\text{inv}}X}\right) \hookrightarrow \left(\wedge^\bullet X,\,\de\right) $$
is a quasi-isomorphism, hence
$$  H^\bullet\left(\wedge^\bullet\duale{\mathfrak{g}},\,\de_{\mathfrak{g}}\right) \; \simeq \; H^\bullet\left(\wedge^\bullet_{\text{inv}}X,\,\de\lfloor_{\wedge^\bullet_{\text{inv}}X}\right) \;=:\; H^{\bullet}_{\text{inv}}(X;\R) \;\stackrel{\simeq}{\longrightarrow}\; H^\bullet_{dR}(X;\R) \;.$$
In this section, we study \para-complex decomposition in cohomology at the level of $H^\bullet\left({\mathfrak{g}};\R\right) := H^\bullet\left(\wedge^\bullet\duale{\mathfrak{g}},\,\de_{\mathfrak{g}}\right)$.

Recall that the linear almost \para-complex structure $K$ on $\mathfrak{g}$ defines a splitting $\mathfrak{g}=\mathfrak{g}^+\oplus \mathfrak{g}^-$ into eigenspaces and hence, for every $\ell\in\N$, one gets also the splitting
$$ \wedge^\ell\duale{\mathfrak{g}} \;=\; \bigoplus_{p+q=\ell}\wedge^{p}\duale{\left(\mathfrak{g}^+\right)}\otimes\wedge^{q}\duale{\left(\mathfrak{g}^-\right)} \;=:\; \bigoplus_{p+q=\ell}\formepm{p}{q}\duale{\mathfrak{g}} \;,$$
where, for any $p,q\in\N$, one has $K\lfloor_{\formepm{p}{q}\duale{\mathfrak{g}}}=(+1)^p\,(-1)^q\,\id$; we introduce also the splitting of the differential forms into their $K$-invariant and $K$-anti-invariant components:
$$ \wedge^\bullet\duale{\mathfrak{g}} \;=\; \wedge^{\bullet\,+}_K\duale{\mathfrak{g}}\,\oplus\,\wedge^{\bullet\,-}_K\duale{\mathfrak{g}} $$
where
$$ \wedge^{\bullet\,+}_K\duale{\mathfrak{g}} \;:=\; \bigoplus_{q\text{ even}}\formepm{\bullet}{q}\duale{\mathfrak{g}} \quad \text{ and }\quad \wedge^{\bullet\,-}_K\duale{\mathfrak{g}} \;:=\; \bigoplus_{q\text{ odd}}\formepm{\bullet}{q}\duale{\mathfrak{g}} \;. $$

As already done for manifolds, for any $p,q,\ell\in\N$, we define
$$ H^{(p,q)}_K\left({\mathfrak{g}};\R\right) \;:=\; \left\{\left[\alpha\right]\in H^{p+q}\left(\mathfrak{g};\R\right)\st \alpha\in\formepm{p}{q}\duale{\mathfrak{g}}\right\} $$
and
\begin{eqnarray*}
H^{\ell\,+}_{K}\left(\mathfrak{g};\R\right) &:=& \left\{\left[\alpha\right]\in H^\ell\left({\mathfrak{g}};\R\right) \st K\alpha=\alpha\right\}
\;, \\[5pt]
H^{\ell\,-}_{K}\left(\mathfrak{g};\R\right) &:=& \left\{\left[\alpha\right]\in H^\ell\left({\mathfrak{g}};\R\right) \st K\alpha=-\alpha\right\}
\;.
\end{eqnarray*}

We give the following definition.
\begin{defi}
 For $\ell\in\N$, a linear almost \para-complex structure on the Lie algebra $\mathfrak{g}$ is said to be
\begin{itemize}
 \item \emph{linear \Cp\ at the $\ell$-th stage} if
  $$ H^{\ell\,+}_K\left(\mathfrak{g};\R\right)\,\cap\,H^{\ell\,-}_K\left(\mathfrak{g};\R\right) \;=\; \left\{0\right\} \;;$$
 \item \emph{linear \Cf\ at the $\ell$-th stage} if
  $$ H^{\ell\,+}_K\left(\mathfrak{g};\R\right)\,+\,H^{\ell\,-}_K\left(\mathfrak{g};\R\right) \;=\; H^\ell\left(\mathfrak{g};\R\right) \;;$$
 \item \emph{linear \Cpf\ at the $\ell$-th stage} if it is both \Cp\ at the $\ell$-th stage and \Cf\ at the $\ell$-th stage, or, in other words, if it satisfies the cohomological decomposition
  $$ H^\ell\left(\mathfrak{g};\R\right) \;=\; H^{\ell\,+}_K\left(\mathfrak{g};\R\right)\,\oplus\,H^{\ell\,-}_K\left(\mathfrak{g};\R\right) \;.$$
 \end{itemize}
\end{defi}

\medskip

Given a completely-solvable solvmanifold, we want now to make clear the connection between the \Cpf ness of an invariant almost \para-complex structure and the linear \Cpf ness of the corresponding linear almost \para-complex structure on the associated Lie algebra.

\noindent We need the following result by J. Milnor.

\begin{lemma}[{\cite[Lemma 6.2]{milnor}}]\label{lemma:milnor}
 Any connected Lie group that admits a discrete subgroup with compact quotient is unimodular and in particular admits a bi-invariant volume form $\eta$.
\end{lemma}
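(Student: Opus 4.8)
The plan is to prove unimodularity by an averaging argument over the compact quotient, after which the bi-invariant volume form comes for free. First I would fix a left-invariant volume form $\eta$ on $G$ (such a form always exists) and recall that right translations act on it by $R_g^{*}\eta=\Delta(g)\,\eta$, where $\Delta\colon G\to\R_{>0}$ is the modular function, a continuous homomorphism equal, up to the usual sign convention, to $g\mapsto\det\operatorname{Ad}(g)$. By definition $\Delta$ takes positive values, and $G$ is unimodular exactly when $\Delta\equiv 1$; in that case $\eta$ is simultaneously left- and right-invariant, hence bi-invariant. So everything reduces to proving $\Delta\equiv 1$.

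Next I would descend to the quotient $X:=\Gamma\backslash G$. Since $\eta$ is invariant under left translations by all of $G$, it is in particular invariant under the left action of the discrete subgroup $\Gamma$, and therefore descends to a volume form $\bar\eta$ on $X$ characterized by $\proj^{*}\bar\eta=\eta$, where $\proj\colon G\to X$ is the projection. The right action of $G$ on $X$, given by $\Gamma g\mapsto\Gamma gh$, is well defined and commutes with the left $\Gamma$-action, so every right translation $R_h\colon X\to X$ is a diffeomorphism and $R_h^{*}\bar\eta=\Delta(h)\,\bar\eta$. Because $X$ is compact, the total volume $V:=\int_X\bar\eta$ is finite and nonzero.

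The decisive step is a change of variables. For each $h\in G$ the map $R_h\colon X\to X$ is a diffeomorphism onto $X$, orientation-preserving because $\Delta(h)>0$, so $\int_X R_h^{*}\bar\eta=\int_X\bar\eta=V$; on the other hand $\int_X R_h^{*}\bar\eta=\Delta(h)\int_X\bar\eta=\Delta(h)\,V$. Comparing gives $\Delta(h)\,V=V$, and since $V\neq 0$ we obtain $\Delta(h)=1$ for every $h\in G$. Thus $G$ is unimodular and the left-invariant form $\eta$ is bi-invariant, which is the assertion. I expect the only genuine obstacle to be the bookkeeping: one must verify that the left-invariant form really descends to $X$ and that the induced right $G$-action scales $\bar\eta$ by the same factor $\Delta$ as on $G$. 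Once this is in place, compactness converts the scaling identity into $\Delta\equiv1$ with no further work.
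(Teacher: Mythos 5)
Your argument is correct and is essentially the standard averaging proof from the cited source (the paper itself gives no proof of this lemma, simply quoting \cite[Lemma 6.2]{milnor}): the left-invariant volume form descends to the compact quotient, right translations scale it by the modular function, and integrating over the compact quotient forces the modular function to be identically $1$. All the bookkeeping you flag (descent of $\eta$, well-definedness of the right action, orientation-preservation of $R_h$) checks out, so there is nothing to add.
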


\noindent The previous Lemma is used to prove the following result, for which we refer to \cite{fino-grantcharov} by A. Fino and G. Grantcharov.

\begin{lemma}[{see \cite[Theorem 2.1]{fino-grantcharov}}]\label{lemma:fino-grantcharov}
Let $X:=:\left.\Gamma\right\backslash G$ be a solvmanifold and call $\mathfrak{g}$ the Lie algebra that is naturally associated to the connected simply-connected Lie group $G$. Denote by $K$ an invariant almost \para-complex structure on $X$ or equivalently the associated linear almost \para-complex structure on $\mathfrak{g}$.
Let $\eta$ be the bi-invariant volume form on $G$ given by Lemma \ref{lemma:milnor} and suppose that $\int_X\eta=1$. Define the map
$$ \mu\colon \wedge^\bullet X \to \wedge^\bullet_{\text{inv}}X\;,\qquad \mu(\alpha)\;:=\;\int_X \alpha\lfloor_m \, \eta(m) \;.$$
One has that
$$ \mu\lfloor_{\wedge^\bullet_{\text{inv}}X}\;=\;\id\lfloor_{\wedge^\bullet_{\text{inv}}X} $$
and that
$$ \de\left(\mu(\sspace)\right) \;=\; \mu\left(\de\sspace\right) \qquad \text{ and } \qquad K\left(\mu(\sspace)\right) \;=\; \mu\left(K\sspace\right) \;.$$
\end{lemma}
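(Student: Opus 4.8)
The plan is to recognize $\mu$ as the standard symmetrization (group-averaging) operator and then to read off all three assertions from the naturality of pullback together with the bi-invariance of $\eta$. Throughout I view a differential form $\alpha$ on $X=\left.\Gamma\right\backslash G$ as a $\Gamma$-left-invariant form on $G$, and I write $L_g\colon G\to G$, $L_g(x):=gx$, for left translation. First I would make the definition of $\mu$ explicit: for each $m$ the differential of $L_m$ identifies $T_eG$ with $T_mG$, so $\duale{L_m}$ pulls the value $\alpha_m$ back to an element $\left(\duale{L_m}\alpha\right)_e\in\wedge^\bullet\duale{\mathfrak{g}}$, and the reading of $\int_X\alpha\lfloor_m\,\eta(m)$ is
$$ \mu(\alpha)_e \;=\; \int_X \left(\duale{L_m}\alpha\right)_e\,\eta(m)\;\in\;\wedge^\bullet\duale{\mathfrak{g}}\;. $$
Since $\alpha$ is $\Gamma$-invariant and $L_{\gamma m}=L_\gamma\circ L_m$, the integrand is constant on $\Gamma$-cosets, so the integral over the compact quotient $X$ is well defined; and since every element of $\wedge^\bullet\duale{\mathfrak{g}}$ extends to a unique left-invariant form, this produces a candidate $\mu(\alpha)\in\wedge^\bullet_{\text{inv}}X$.

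The crucial point is that $\mu(\alpha)$ is genuinely left-invariant. To see this I would compute, for $g\in G$,
$$ \duale{L_g}\mu(\alpha) \;=\; \int_X \duale{(L_m\circ L_g)}\alpha\,\eta(m) \;=\; \int_X \duale{L_{mg}}\alpha\,\eta(m) \;=\; \int_X \duale{L_{m'}}\alpha\,\eta(m') \;=\; \mu(\alpha)\;, $$
where the third equality is the substitution $m'=mg$ combined with the right-invariance of $\eta$. This is exactly where unimodularity of $G$ (Lemma \ref{lemma:milnor}), guaranteeing that $\eta$ is bi-invariant and hence induces an $R_g$-invariant measure on $X$, is used. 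With $\mu$ so described, the three stated properties follow formally. If $\alpha$ is already invariant then $\duale{L_m}\alpha=\alpha$ for every $m$, so $\mu(\alpha)=\left(\int_X\eta\right)\alpha=\alpha$, which is the identity $\mu\lfloor_{\wedge^\bullet_{\text{inv}}X}=\id\lfloor_{\wedge^\bullet_{\text{inv}}X}$. For the commutation with $\de$ I would differentiate under the integral sign—legitimate because $\eta$ does not depend on the averaged form—and invoke $\de\,\duale{L_m}=\duale{L_m}\,\de$, giving $\de(\mu(\alpha))=\int_X\duale{L_m}(\de\alpha)\,\eta(m)=\mu(\de\alpha)$. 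For the commutation with $K$, the invariance of $K$ means precisely that $K$ commutes with each $\duale{L_m}$; as $K$ is moreover a pointwise linear endomorphism it passes through the integral, yielding $K(\mu(\alpha))=\int_X\duale{L_m}(K\alpha)\,\eta(m)=\mu(K\alpha)$.

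The individual computations are routine; the single genuine subtlety—and what I expect to be the main obstacle to phrase cleanly—is the left-invariance of the averaged form $\mu(\alpha)$. It rests entirely on the right-invariance half of the bi-invariance of $\eta$: without unimodularity the substitution $m'=mg$ would introduce a modular-function factor and $\mu(\alpha)$ would fail to be left-invariant. Once that is secured, the well-definedness over $X$ follows from the $\Gamma$-invariance of $\alpha$, and the two commutation identities follow from the naturality of pullback and the linearity of $K$, so the lemma is complete.
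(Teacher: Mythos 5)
Your argument is correct, but note that the paper itself offers no proof of this lemma: it is stated with a pointer to \cite[Theorem 2.1]{fino-grantcharov}, so there is no internal proof to compare against. Your write-up is a sound self-contained justification of the standard averaging (symmetrization) operator. The one genuinely delicate point --- why $\mu(\alpha)$ is an \emph{invariant} form and where bi-invariance of $\eta$ enters --- is exactly the one you isolate, and your treatment of it is right: the substitution $m'=mg$ uses right-invariance of the induced measure on $X$, which is where Lemma \ref{lemma:milnor} (unimodularity) is consumed; the well-definedness of the integrand on $\Gamma$-cosets uses only the $\Gamma$-invariance of $\alpha$; and the two commutation identities then follow from $\de\circ\duale{L_m}=\duale{L_m}\circ\de$, from the invariance and fibrewise linearity of $K$, and from differentiation under the integral sign (harmless on a compact $X$ with smooth data). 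For comparison, the usual presentation in the cited source defines $\mu(\alpha)$ directly as the multilinear form $(\xi_1,\dots,\xi_k)\mapsto\int_X\alpha_m(\xi_1|_m,\dots,\xi_k|_m)\,\eta(m)$ on invariant vector fields, so invariance of the output is automatic and unimodularity instead enters when proving $\de\mu=\mu\de$, via $\int_X \xi(f)\,\eta=0$ for invariant $\xi$; your route shifts that same use of unimodularity to the invariance step and then gets the commutation with $\de$ formally. Both are valid, and nothing is missing from your proof.
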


Then we can prove the following result. Note that, with slight and obvious modifications, it holds also for almost complex structures: a similar result for almost complex structures has been obtained also by A. Tomassini and A. Fino in \cite[Theorem 3.4]{fino-tomassini}.

\begin{prop}\label{prop:linear-cpf-invariant-cpf}
Let $X:=:\left.\Gamma\right\backslash G$ be a completely-solvable solvmanifold and call $\mathfrak{g}$ the Lie algebra that is naturally associated to the connected simply-connected Lie group $G$. Denote by $K$ an invariant almost \para-complex structure on $X$ or equivalently the associated linear almost \para-complex structure on $\mathfrak{g}$.
Then, for every $\ell\in\N$ and for $\pm\in\{+,\,-\}$, the injective map
$$ H^{\ell\,\pm}_K(\mathfrak{g};\R) \to H^{\ell\,\pm}_K(X;\R) $$
induced by translations is an isomorphism.\\
Furthermore, for every $\ell\in\N$, the linear \para-complex structure $K\in\End(\mathfrak{g})$ is linear \Cp\ (respectively, linear \Cf) at the $\ell$-th stage if and only if the \para-complex structure $K\in\End(TX)$ is \Cp\ (respectively, \Cf) at the $\ell$-th stage.
\end{prop}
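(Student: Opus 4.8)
The plan is to exploit the averaging projection $\mu\colon \wedge^\bullet X\to\wedge^\bullet_{\text{inv}}X$ of Lemma~\ref{lemma:fino-grantcharov} to extract, from any $K$-invariant or $K$-anti-invariant de Rham class, an \emph{invariant} representative of the same type, while the Nomizu--Hattori theorem guarantees that this representative is cohomologous to the original form. First I would record the two inputs. By Nomizu's and Hattori's theorems, for a completely-solvable $X$ the inclusion $\iota\colon \left(\wedge^\bullet_{\text{inv}}X,\,\de\right)\hookrightarrow\left(\wedge^\bullet X,\,\de\right)$ is a quasi-isomorphism, so $\iota^\ast\colon H^\bullet(\mathfrak{g};\R)\cong H^\bullet_{\text{inv}}(X;\R)\to H^\bullet_{dR}(X;\R)$ is an isomorphism; and, by Lemma~\ref{lemma:fino-grantcharov}, $\mu$ is a degree-preserving chain map that commutes with $K$ and satisfies $\mu\circ\iota=\id$ on invariant forms.

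Injectivity of the translation-induced map $H^{\ell\,\pm}_K(\mathfrak{g};\R)\to H^{\ell\,\pm}_K(X;\R)$ is then immediate, as it is nothing but the restriction to the $\pm$-subspaces of the injective map $\iota^\ast$. The real content is surjectivity, and this is where $\mu$ enters. Given a class in $H^{\ell\,\pm}_K(X;\R)$, I would pick a representative $\alpha\in\wedge^{\ell\,\pm}_KX$ and set $\beta:=\mu(\alpha)$. Since $\de\beta=\mu(\de\alpha)=0$ and $K\beta=\mu(K\alpha)=\pm\mu(\alpha)=\pm\beta$, the form $\beta$ is an invariant, closed $\pm$-form, hence defines a class in $H^{\ell\,\pm}_K(\mathfrak{g};\R)$. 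It remains to see that $\beta$ and $\alpha$ determine the same de Rham class: from $\mu\circ\iota=\id$ we get $\mu^\ast\circ\iota^\ast=\id$, and since $\iota^\ast$ is an isomorphism this forces $\mu^\ast=\left(\iota^\ast\right)^{-1}$, whence $\iota^\ast\circ\mu^\ast=\id$ on $H^\ell_{dR}(X;\R)$; unwinding, $\left[\beta\right]=\left[\alpha\right]$ in $H^\ell_{dR}(X;\R)$. Thus the class of $\beta$ maps to the given class, proving surjectivity and hence that the map is an isomorphism.

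With the isomorphisms $H^{\ell\,\pm}_K(\mathfrak{g};\R)\cong H^{\ell\,\pm}_K(X;\R)$ established and compatible with the inclusions into $H^\ell(\mathfrak{g};\R)\cong H^\ell_{dR}(X;\R)$, the stated equivalences are formal. The isomorphism $\iota^\ast$ restricts to the two isomorphisms above on the $+$ and $-$ subspaces, so it carries $H^{\ell\,+}_K(\mathfrak{g};\R)\cap H^{\ell\,-}_K(\mathfrak{g};\R)$ onto $H^{\ell\,+}_K(X;\R)\cap H^{\ell\,-}_K(X;\R)$ and $H^{\ell\,+}_K(\mathfrak{g};\R)+H^{\ell\,-}_K(\mathfrak{g};\R)$ onto $H^{\ell\,+}_K(X;\R)+H^{\ell\,-}_K(X;\R)$. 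Consequently the intersection is trivial on one side precisely when it is on the other (linear \Cp\ at the $\ell$-th stage $\Leftrightarrow$ \Cp\ at the $\ell$-th stage), and the sum fills the whole space on one side precisely when it does on the other (linear \Cf\ at the $\ell$-th stage $\Leftrightarrow$ \Cf\ at the $\ell$-th stage).

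The one genuinely nontrivial step is the identity $\left[\mu(\alpha)\right]=\left[\alpha\right]$ in de Rham cohomology, and this is exactly where complete solvability is used: it rests on the Nomizu--Hattori quasi-isomorphism, without which $\mu$ would not descend to the inverse of $\iota^\ast$ and surjectivity could genuinely fail. By contrast, the facts that $\mu(\alpha)$ is again a closed $\pm$-form and that the pure/full equivalences follow are purely formal consequences of the relations $\de\,\mu=\mu\,\de$ and $K\,\mu=\mu\,K$ furnished by Lemma~\ref{lemma:fino-grantcharov}.
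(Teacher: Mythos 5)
Your proof is correct and follows essentially the same route as the paper's: both rely on the averaging map $\mu$ of Lemma \ref{lemma:fino-grantcharov}, the three relations $\de\,\mu=\mu\,\de$, $K\,\mu=\mu\,K$, $\mu\lfloor_{\wedge^\bullet_{\text{inv}}X}=\id$, and the Nomizu--Hattori quasi-isomorphism to conclude that $\mu$ induces the inverse of the translation map in cohomology while preserving the $\pm$-types. You merely spell out the surjectivity argument and the formal deduction of the \Cp/\Cf\ equivalences in more detail than the paper does.
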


\begin{proof}
Consider the map $\mu\colon\wedge^\bullet X \to \wedge^\bullet_{\text{inv}}X$ defined in Lemma \ref{lemma:fino-grantcharov}. The thesis follows from the following three observations.\\
Since $\de\left(\mu(\sspace)\right)=\mu\left(\de\sspace\right)$, one has that $\mu$ sends $\de$-closed (respectively, $\de$-exact) forms to $\de$-closed (respectively, $\de$-exact) invariant forms and so it induces a map
$$ \mu\colon H^\bullet_{dR}(X;\R)\to H^\bullet_{\text{inv}}(X;\R)\;\simeq\; H^\bullet\left(\mathfrak{g};\R\right) \;.$$
Since $K\left(\mu(\sspace)\right)=\mu\left(K\sspace\right)$, for $\pm\in\{+,\,-\}$, one has
$$ \mu\left(\wedge^{\bullet\,\pm}_KX\right) \;\subseteq\; \wedge^{\bullet\,\pm}_{K\,\text{inv}}X \;,$$
where $\wedge^{\bullet\,\pm}_{K\,\text{inv}}X:=\wedge^{\bullet\,\pm}_{K}X\cap\wedge^\bullet_{\text{inv}}X\simeq \wedge^{\bullet\,\pm}_K\duale{\mathfrak{g}}$, hence
$$ \mu\left(H^{\bullet\,\pm}_K(X;\R)\right) \;\subseteq\; H^{\bullet\,\pm}_K\left(\mathfrak{g};\R\right) \;. $$
Lastly, since $X$ is a completely-solvable solvmanifold, its cohomology is isomorphic to the invariant one (see \cite{hattori}) and hence the condition $\mu\lfloor_{\wedge^\bullet_{\text{inv}}X}=\id\lfloor_{\wedge^\bullet_{\text{inv}}X}$ gives that $\mu$ is the identity in cohomology.
\end{proof}

\section{\Cpf ness of invariant \para-complex structures on solvmanifolds}\label{sec:cpfness-solvmanifolds}
\subsection{Some examples of non-\Cpf\ (almost) \para-complex nilmanifolds}
In this section, we use the notation and the results in Section \ref{sec:invariant-cpf-para-complex-structures-on-solvmanifolds} to provide examples of invariant (almost) \para-complex structures on nilmanifolds.

Firstly, we give two examples of non-\Cp\ or non-\Cf\ nilmanifolds admitting \para-K\"ahler structures.

\begin{ex}\label{es 2.5}{\itshape There exists a $6$-dimensional \para-complex nilmanifold that is \Cp\ at the $2$-nd stage and non-\Cf\ at the $2$-nd stage and admits a \para-K\"ahler structure.}\\
Indeed, take the nilmanifold
 $$ X \;:=\; \left(0^4,\, 12,\, 13\right) $$
and define the invariant \para-complex structure $K$ setting
$$ K \;:=\; \left(-\,+\,+\,-\,-\,+\right) \;.$$
By Nomizu's theorem (see \cite{nomizu}), the de Rham cohomology of $X$ is given by
$$
H^2_{dR}(X;\R) \;\simeq\; H^2_{dR}(\mathfrak{g};\R) \;=\; \R\left\langle e^{14},\; e^{15},\; e^{16},\; e^{23},\; e^{24},\; e^{25},\; e^{34},\; e^{36},\; e^{26}+e^{35} \right\rangle \;.
$$
Note that
$$ H^{2\,+}_K\left(\mathfrak{g};\R\right) \;=\; \R\left\langle e^{14},\;e^{15},\;e^{23},\;e^{36} \right\rangle $$
and
$$ H^{2\,-}_K\left(\mathfrak{g};\R\right) \;=\; \R\left\langle e^{16},\;e^{24},\;e^{25},\;e^{34} \right\rangle \;, $$
since no invariant representative in the class $\left[e^{26}+e^{35}\right]$ is of pure type with respect to $K$ (indeed, the space of invariant $\de$-exact $2$-forms is $\R\left\langle e^{12},\, e^{13}\right\rangle$). It follows that $K\in\End\left(\mathfrak{g}\right)$ is linear \Cp\ at the $2$-nd stage and linear non-\Cf\ at the $2$-nd stage and hence, by Proposition \ref{prop:linear-cpf-invariant-cpf}, $K\in\End(TX)$ is \Cp\ at the $2$-nd stage (being $K$ Abelian, see Definition \ref{def:abelian}, one can also argue using Corollary \ref{cor:abelian}) and non-\Cf\ at the $2$-nd stage.\\
Moreover, we observe that
$$ \omega \;:=\; e^{16}+e^{25}+e^{34} $$
is a symplectic form compatible with $K$, hence $\left(X,\,K,\,\omega\right)$ is a \para-\kal\ manifold.
\end{ex}

\begin{ex}\label{es 2.6} {\itshape There exists a $6$-dimensional \para-complex nilmanifold that is non-\Cp\ at the $2$-nd stage (and hence non-\Cf\ at the $4$-th stage) and admitting a \para-K\"ahler structure.}\\
Indeed, take the nilmanifold $X$ defined by
 $$ X \;:=\; \left(0^3,\;12,\;13+14,\;24\right) $$
and define the invariant \para-complex structure $K$ as
 $$ K \;:=\; \left(+\;-\;+\;-\;+\;-\right) \;.$$
(Note that, since $\left[e_2,e_4\right]=-e_6$, one has that $\left[\mathfrak{g}^-,\,\mathfrak{g}^-\right]\neq\left\{0\right\}$ and hence $K$ is not Abelian, see Definition \ref{def:abelian}.)\\
We have
$$ H^{2\,+}_K\left(\mathfrak{g};\R\right) \;\ni\; \left[e^{13}\right] \;=\; \left[e^{13}-\de e^{5}\right] \;=\; -\left[e^{14}\right] \;\in\; H^{2\,-}_K\left(\mathfrak{g};\R\right) $$
and therefore we get that
$$ 0 \;\neq\; \left[e^{13}\right] \;\in\; H^{2\,+}_K\left(\mathfrak{g};\R\right)\,\cap\,H^{2\,-}_K\left(\mathfrak{g};\R\right) \;,$$
namely, $K\in\End(\mathfrak{g})$ is not linear \Cp\ at the $2$-nd stage, hence $K\in\End(TX)$ is not \Cp\ at the $2$-nd stage; furthermore, by Proposition \ref{prop:cpf-pf}, we have also that $K$ is not \Cf\ at the $4$-th stage.\\
Moreover, we observe that
$$ \omega \;:=\; e^{16}+e^{25}+e^{34} $$
is a symplectic form compatible with $K$, hence $\left(X,\,K,\,\omega\right)$ is a \para-\kal\ manifold.
\end{ex}

The previous two examples prove the following result, in contrast with the complex case (see, e.g., \cite{li-zhang}). (Note that higher-dimensional examples of \para-K\"ahler non-\Cf, respectively non-\Cp, at the $2$-nd stage structures can be obtained taking products with standard \para-complex tori.)

\begin{prop}\label{prop:para-kahler}
 Admitting a \para-K\"ahler structure does not imply neither being \Cp\ at the $2$-nd stage nor being \Cf\ at the $2$-nd stage.
\end{prop}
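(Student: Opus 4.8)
The plan is to prove the statement by counterexample, treating the two non-implications separately. Since the assertion is negative, it suffices to exhibit one compact \para-\kal\ manifold that fails to be \Cp\ at the $2$-nd stage, together with one compact \para-\kal\ manifold that fails to be \Cf\ at the $2$-nd stage. These two witnesses are exactly the $6$-dimensional nilmanifolds constructed in Example \ref{es 2.5} and Example \ref{es 2.6}, so that, granting those computations, the proposition follows immediately.

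First I would reduce everything to linear algebra over the associated Lie algebra $\mathfrak{g}$. By Nomizu's theorem the de Rham cohomology of each nilmanifold is computed by invariant forms, and by Proposition \ref{prop:linear-cpf-invariant-cpf} the \Cp\ and \Cf\ properties at the $2$-nd stage coincide at the level of $X$ and at the linear level of $\mathfrak{g}$. Thus, for each candidate $K$, the work amounts to: (i) fixing an eigenspace splitting $\mathfrak{g}=\mathfrak{g}^+\oplus\mathfrak{g}^-$ that is integrable, i.e. $\left[\mathfrak{g}^{\pm},\mathfrak{g}^{\pm}\right]\subseteq\mathfrak{g}^{\pm}$; (ii) producing a closed, non-degenerate, $K$-anti-invariant $2$-form $\omega$, which certifies the \para-\kal\ condition (in both examples one checks $\omega:=e^{16}+e^{25}+e^{34}$ works); and (iii) computing $H^{2\,+}_K(\mathfrak{g};\R)$ and $H^{2\,-}_K(\mathfrak{g};\R)$ inside $H^2(\mathfrak{g};\R)$ and comparing them with the full group.

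The two pathologies are then detected in dual ways. For the failure of fullness (Example \ref{es 2.5}) the decisive point is to isolate a de Rham class—there, $\left[e^{26}+e^{35}\right]$—that admits no invariant representative of pure type with respect to $K$; this requires first computing the space of invariant $\de$-exact $2$-forms, namely $\R\langle e^{12},\,e^{13}\rangle$, and then checking that no admissible modification by such a form splits the class into a $K$-invariant plus a $K$-anti-invariant closed piece. For the failure of purity (Example \ref{es 2.6}) one instead exhibits a single nonzero class in the intersection $H^{2\,+}_K\cap H^{2\,-}_K$, obtained from two cohomologous closed forms of opposite $K$-type, $e^{13}$ and $-e^{14}$, which differ by the exact form $\de e^{5}$.

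The main obstacle is not the verification but the construction: one must simultaneously arrange integrability of the splitting, the existence of a compatible symplectic form, and the prescribed degeneracy in $H^2$. In particular the fullness counterexample is the more delicate of the two, since one has to rule out \emph{all} invariant pure-type representatives of a fixed class, not merely the evident one; this is precisely where the explicit description of the invariant $\de$-exact $2$-forms, combined with Proposition \ref{prop:linear-cpf-invariant-cpf}, carries the argument. Once both examples are in hand, each refutes one of the two implications, which proves the proposition.
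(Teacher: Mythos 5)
Your proposal is correct and coincides with the paper's own argument: the proposition is proved precisely by citing Example \ref{es 2.5} (a \para-\kal\ nilmanifold that is \Cp\ but non-\Cf\ at the $2$-nd stage, via the class $\left[e^{26}+e^{35}\right]$) and Example \ref{es 2.6} (a \para-\kal\ nilmanifold that is non-\Cp\ at the $2$-nd stage, via $\left[e^{13}\right]=-\left[e^{14}\right]$), with the reduction to invariant forms handled by Nomizu's theorem and Proposition \ref{prop:linear-cpf-invariant-cpf} exactly as you describe. No gaps.
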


\medskip

The following example shows that \cite[Theorem 2.3]{draghici-li-zhang} by T. Dr\v{a}ghici, T.-J. Li and W. Zhang (saying that every almost complex structure on a $4$-dimensional compact manifold induces an almost complex decomposition at the level of the real second de Rham cohomology group) does not hold, in general, in the almost \para-complex case.

\begin{ex}\label{es 2.8}
{\itshape There exists a $4$-dimensional almost \para-complex nilmanifold which is non-\Cpf\ at the $2$-nd stage.}\\
Indeed, take the nilmanifold $X$ defined by
 $$ X \;:=\; \left(0,0,12,13\right) $$
(namely, the product of the Heisenberg group and $\R$) and define the invariant almost \para-complex structure by the eigenspaces
\begin{equation*}
\mathfrak{g}^+\;:=\;\R\left\langle e_1,\, e_4-e_2\right\rangle \quad \text{ and }\quad \mathfrak{g}^-\;:=\;\R\left\langle e_2,\, e_3\right\rangle.
\end{equation*}
Note that $K$ is not integrable, since $\left[e_1,e_4-e_2\right]=e_3$.\\
Note that we have
$$ H^{2\,+}_K\left(\mathfrak{g};\R\right)\ni\left[e^{14}\right]=\left[e^{14}+\de e^{3}\right]=\left[e^{14}+e^{12}\right]= \left[e^1\wedge(e^4+e^2)\right]\in H^{2\,-}_K\left(\mathfrak{g};\R\right) $$
and therefore we get that
$$ 0 \;\neq\; \left[e^{14}\right] \;\in\; H^{2\,+}_K\left(\mathfrak{g};\R\right)\,\cap\,H^{2\,-}_K\left(\mathfrak{g};\R\right) \;,$$
Then, $K$ is not \Cp\ at the $2$-nd stage and, by Proposition \ref{prop:cpf-pf}, is not \Cf\ at the $2$-nd stage.
\end{ex}

\subsection{\Cpf ness of low-dimensional \para-complex solvmanifolds}\label{subsec:Cpf-low-dim}
Let $\left(\mathfrak{a},\,\left[\sspace,\ssspace\right]\right)$ be a Lie algebra and consider the \emph{lower central series} $\left\{\mathfrak{a}_n\right\}_{n\in\N}$ defined, by induction on $n\in\N$, as
$$
\left\{
\begin{array}{rcll}
 \mathfrak{a}^0 &:=& \mathfrak{a} & \\[5pt]
 \mathfrak{a}^{n+1} &:=& \left[\mathfrak{a}^n,\,\mathfrak{a}\right] & \text{ for }n\in\N
\end{array}
\right. \;;
$$
note that $\left\{\mathfrak{a}_n\right\}_{n\in\N}$ is a descending sequence of Lie algebras:
$$ \mathfrak{a}\;=\; \mathfrak{a}^0 \;\supseteq \mathfrak{a}^1 \;\supseteq\; \cdots \;\supseteq\; \mathfrak{a}^{j-1} \;\supseteq\; \mathfrak{a}^j \;\supseteq\; \cdots \;.$$
Recall that the \emph{nilpotent step} of $\mathfrak{a}$ is defined as
$$ s\left(\mathfrak{a}\right) \;:=\; \inf\left\{n\in\N \st \mathfrak{a}^n=0\right\} \;, $$
so $s\left(\mathfrak{a}\right)<+\infty$ means that $\mathfrak{a}$ is nilpotent.

In particular, if the linear \para-complex structure $K$ on the Lie algebra $\mathfrak{g}$ induces the decomposition $\mathfrak{g}=\mathfrak{g}^+\oplus\mathfrak{g}^-$, we consider
$$ s^+ \;:=\; s\left(\mathfrak{g}^+\right) \qquad \text{ and }\qquad s^- \;:=\; s\left(\mathfrak{g}^-\right) \;;$$
since $\mathfrak{g}^+\subset\mathfrak{g}$ and $\mathfrak{g}^-\subset\mathfrak{g}$, we have obviously that
$$ s^+\;\leq\; s\left(\mathfrak{g}\right) \qquad \text{ and } \qquad s^- \;\leq\; s\left(\mathfrak{g}\right)  \;. $$

We start with the following easy lemma.
\begin{lemma}\label{lemma:s+-}
 Let $\mathfrak{g}$ be a $2n$-dimensional nilpotent Lie algebra, that is, $s\left(\mathfrak{g}\right)<+\infty$. Let $K$ be a linear \para-complex structure on $\mathfrak{g}$, inducing the decomposition $\mathfrak{g}=\mathfrak{g}^+\oplus\mathfrak{g}^-$. Then, setting $s^\pm:=s\left(\mathfrak{g}^{\pm}\right)$ for $\pm\in\left\{+,\,-\right\}$, we have
$$ 1 \;\leq\; s^+ \;\leq\; n-1 \qquad \text{ and } \qquad 1 \;\leq\; s^- \;\leq\; n-1 \;.$$
\end{lemma}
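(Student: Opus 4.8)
The plan is to handle the two inequalities separately and to reduce everything to the classical bound on the nilpotency step of a nilpotent Lie algebra, applied to the subalgebras $\mathfrak{g}^+$ and $\mathfrak{g}^-$. First I would record that, since $K$ is integrable, $\mathfrak{g}^+$ and $\mathfrak{g}^-$ are genuine Lie subalgebras of $\mathfrak{g}$; being subalgebras of a nilpotent Lie algebra, they are themselves nilpotent, so $s^+=s(\mathfrak{g}^+)$ and $s^-=s(\mathfrak{g}^-)$ are finite and the statement makes sense. The lower bounds are then immediate: since $\dim_\R\mathfrak{g}^\pm=n\geq 1$ we have $(\mathfrak{g}^\pm)^0=\mathfrak{g}^\pm\neq 0$, so $0\notin\{k:(\mathfrak{g}^\pm)^k=0\}$ and therefore $s^\pm\geq 1$.

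For the upper bounds I would isolate and prove the general fact that a nilpotent Lie algebra $\mathfrak{a}$ with $\dim_\R\mathfrak{a}=m\geq 2$ has $s(\mathfrak{a})\leq m-1$, and then apply it with $\mathfrak{a}=\mathfrak{g}^\pm$ and $m=n$. The argument is a dimension count along the lower central series $\mathfrak{a}=\mathfrak{a}^0\supseteq\mathfrak{a}^1\supseteq\cdots\supseteq\mathfrak{a}^{s}=0$. For a nilpotent Lie algebra these inclusions are strict until the series reaches $0$ (if $\mathfrak{a}^k=\mathfrak{a}^{k+1}$ with $\mathfrak{a}^k\neq 0$, the series would stabilize at a nonzero term, contradicting nilpotency), so the dimension strictly decreases at each step. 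If $\mathfrak{a}$ is abelian, then $s(\mathfrak{a})=1\leq m-1$. If $\mathfrak{a}$ is non-abelian, the point is that $\dim_\R[\mathfrak{a},\mathfrak{a}]\leq m-2$; granting this, the strictly decreasing chain $\mathfrak{a}^1\supsetneq\cdots\supsetneq\mathfrak{a}^{s}=0$ (with $s$ terms, ending at $0$) forces $\dim_\R\mathfrak{a}^1\geq s-1$, whence $s-1\leq m-2$, i.e. $s\leq m-1$.

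The only non-formal ingredient, and the step I expect to be the main obstacle, is the inequality $\dim_\R[\mathfrak{a},\mathfrak{a}]\leq m-2$ for a non-abelian nilpotent $\mathfrak{a}$, equivalently $\dim_\R(\mathfrak{a}/[\mathfrak{a},\mathfrak{a}])\geq 2$. This rests on the generation lemma for nilpotent Lie algebras: any subspace $V$ complementary to $[\mathfrak{a},\mathfrak{a}]$ generates $\mathfrak{a}$, which one checks by proving $\mathfrak{a}=\langle V\rangle+\mathfrak{a}^k$ for every $k$ by induction and then using $\mathfrak{a}^s=0$. Were the abelianization $1$-dimensional, $\mathfrak{a}$ would be generated by a single element and hence abelian, a contradiction; so $\dim_\R(\mathfrak{a}/[\mathfrak{a},\mathfrak{a}])\geq 2$. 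I would either include this short induction or simply invoke it as standard. Finally I would remark that the statement is to be read for $n\geq 2$ (the range of all the applications in the paper): for $n=1$ the Lie algebra $\mathfrak{g}$ is forced to be abelian, whence $s^\pm=1$, while the claimed upper bound $n-1=0$ degenerates.
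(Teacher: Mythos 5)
Your proof is correct and follows essentially the same route as the paper: the paper's argument is just the assertion that $\dim_\R(\mathfrak{g}^\pm)^0=n$ and $\dim_\R(\mathfrak{g}^\pm)^k\leq\max\{n-k-1,0\}$ for $k\geq 1$, which is exactly your dimension count along the lower central series, resting on the same key fact that a non-abelian nilpotent Lie algebra has derived subalgebra of codimension at least $2$. You spell out that ingredient (via the generation lemma) where the paper leaves it implicit, and your remark that the stated upper bound degenerates for $n=1$ is a legitimate observation the paper glosses over.
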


\begin{proof}
 The proof follows easily observing that, for $\pm\in\left\{+,\;-\right\}$, we have
$$
\left\{
\begin{array}{rcll}
 \dim_\R \left(\mathfrak{g}^\pm\right)^0 &=& n & \\[5pt]
 \dim_\R \left(\mathfrak{g}^\pm\right)^k &\leq& \max\left\{n-k-1,\,0\right\} & \text{ for } k\geq 1\\[5pt]
\end{array}
\right. \;,
$$
as a consequence of the nilpotent condition and of the integrability property.
\end{proof}

We start with the following result, to be compared with Theorem \ref{thm:products}.

\begin{prop}\label{prop:g+g-comm}
 Let $\mathfrak{g}$ be a Lie algebra. If $K$ is a linear \para-complex structure on $\mathfrak{g}$ with eigenspaces $\mathfrak{g}^+$ and $\mathfrak{g}^-$ such that $\left[\mathfrak{g}^+,\,\mathfrak{g}^-\right]=\{0\}$, then $K$ is linear \Cpf\ at every stage.
\end{prop}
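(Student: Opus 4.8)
The plan is to recognize that the vanishing bracket condition, together with integrability, turns $\mathfrak{g}$ into a direct product of Lie algebras, and then to feed this into a K\"unneth-type argument for Chevalley--Eilenberg cohomology that simultaneously yields purity and fullness at every stage.

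First I would observe that, since $K$ is a \emph{linear \para-complex} (hence integrable) structure, one has $[\mathfrak{g}^+,\mathfrak{g}^+]\subseteq\mathfrak{g}^+$ and $[\mathfrak{g}^-,\mathfrak{g}^-]\subseteq\mathfrak{g}^-$; adjoining the hypothesis $[\mathfrak{g}^+,\mathfrak{g}^-]=\{0\}$ shows that both $\mathfrak{g}^+$ and $\mathfrak{g}^-$ are \emph{ideals} and that $\mathfrak{g}=\mathfrak{g}^+\oplus\mathfrak{g}^-$ is a direct sum of Lie algebras. At the level of the Chevalley--Eilenberg complex this means that, writing $\de_\mathfrak{g}=\del_++\del_-$ as in \S\ref{sec:definitions}, the operator $\del_+$ acts only on the $\wedge^\bullet\duale{(\mathfrak{g}^+)}$ factor and $\del_-$ only on the $\wedge^\bullet\duale{(\mathfrak{g}^-)}$ factor. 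Indeed, for $\alpha\in\duale{(\mathfrak{g}^+)}$ one checks that $\de_\mathfrak{g}\alpha$ has pure type $(2,0)$ and equals $\de_{\mathfrak{g}^+}\alpha$ (the mixed evaluation vanishes because $[\mathfrak{g}^+,\mathfrak{g}^-]=\{0\}$), and symmetrically for $\duale{(\mathfrak{g}^-)}$. Hence $(\wedge^\bullet\duale{\mathfrak{g}},\de_\mathfrak{g})$ is exactly the tensor product of the complexes $(\wedge^\bullet\duale{(\mathfrak{g}^+)},\de_{\mathfrak{g}^+})$ and $(\wedge^\bullet\duale{(\mathfrak{g}^-)},\de_{\mathfrak{g}^-})$, with $\del_+$ and $\del_-$ the two anticommuting differentials of the associated double complex.

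Next I would invoke the K\"unneth formula over $\R$ for this tensor product of complexes, obtaining, for every $\ell\in\N$,
$$ H^\ell(\mathfrak{g};\R)\;\simeq\;\bigoplus_{p+q=\ell}H^p(\mathfrak{g}^+;\R)\otimes H^q(\mathfrak{g}^-;\R)\;. $$
The crucial point is that the $(p,q)$-th K\"unneth summand coincides with $H^{(p,q)}_K(\mathfrak{g};\R)$: on the one hand the wedge of a $\del_+$-closed $(p,0)$-form with a $\del_-$-closed $(0,q)$-form is a closed form of pure type $(p,q)$, so the summand is contained in $H^{(p,q)}_K(\mathfrak{g};\R)$; on the other hand, a closed form of type $(p,q)$ is automatically separately $\del_+$- and $\del_-$-closed by bidegree, so choosing over $\R$ complements to the coboundaries inside the cocycles in each factor one writes such a cocycle, modulo $\de_\mathfrak{g}$-coboundaries, as a sum of wedges of cohomology representatives, placing its class in the $(p,q)$-th summand. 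Thus $H^\ell(\mathfrak{g};\R)=\bigoplus_{p+q=\ell}H^{(p,q)}_K(\mathfrak{g};\R)$ is a genuine direct sum.

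Finally, grouping the summands according to the parity of $q$ and recalling that $\formepm{p}{q}\duale{\mathfrak{g}}$ is $K$-invariant for $q$ even and $K$-anti-invariant for $q$ odd, this direct sum reads $H^\ell(\mathfrak{g};\R)=H^{\ell\,+}_K(\mathfrak{g};\R)\oplus H^{\ell\,-}_K(\mathfrak{g};\R)$, which is precisely linear \Cpf ness at the $\ell$-th stage, for every $\ell$. I expect the only delicate step to be the second half of the K\"unneth identification, namely verifying that a class with a pure-type representative cannot acquire components in other bidegrees: this is the algebraic counterpart, in the double-complex language, of the fullness computation already carried out for product manifolds in Theorem \ref{thm:products}, the new content here being that the splitting is in fact \emph{direct}, hence also pure.
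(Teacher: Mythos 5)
Your proof is correct and follows essentially the same route as the paper's, which simply observes that $\left[\mathfrak{g}^+,\,\mathfrak{g}^-\right]=\{0\}$ together with integrability makes $\mathfrak{g}=\mathfrak{g}^+\times\mathfrak{g}^-$ a direct product of Lie algebras and then invokes the K\"unneth formula as in Theorem \ref{thm:products}. The only (minor, and welcome) difference is that you obtain purity directly by identifying each K\"unneth summand with $H^{(p,q)}_K\left(\mathfrak{g};\R\right)$, whereas the argument of Theorem \ref{thm:products} extracts only fullness from K\"unneth and then deduces purity via the duality of Corollary \ref{cor:cf-every-stage}.
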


\begin{proof}
 Since $\left[\mathfrak{g}^+,\,\mathfrak{g}^-\right]=\{0\}$, one can write $\mathfrak{g}=\mathfrak{g}^+ \times \mathfrak{g}^-$ and, using K\"unneth formula as in Theorem \ref{thm:products}, one gets the thesis.
\end{proof}

Therefore, from Proposition \ref{prop:linear-cpf-invariant-cpf}, one gets the following corollary.
\begin{cor}
 Let $X:=:\left.\Gamma\right\backslash G$ be a completely-solvable solvmanifold endowed with an invariant \para-complex structure $K$. Call $\mathfrak{g}$ the Lie algebra naturally associated to the Lie group $G$ and consider the linear \para-complex structure $K\in\End(\mathfrak{g})$ induced by $K\in\End(TX)$. Suppose that the eigenspaces $\mathfrak{g}^+$ and $\mathfrak{g}^-$ of $K\in\End(\mathfrak{g})$ satisfy $\left[\mathfrak{g}^+,\,\mathfrak{g}^-\right]=\{0\}$. Then $K$ is \Cpf\ at every stage and \pf\ at every stage.
\end{cor}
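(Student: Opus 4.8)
The plan is to chain together the two structural results established immediately above, since the statement is a formal consequence of them. First I would invoke Proposition \ref{prop:g+g-comm}: its hypothesis is precisely $\left[\mathfrak{g}^+,\,\mathfrak{g}^-\right]=\{0\}$, which we are assuming here, so it yields at once that the linear \para-complex structure $K\in\End(\mathfrak{g})$ is linear \Cpf\ at every stage. In particular $K\in\End(\mathfrak{g})$ is linear \Cf\ at every stage, and linear \Cp\ at every stage.

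Next I would transport this conclusion from the Lie algebra $\mathfrak{g}$ to the solvmanifold $X$ by means of Proposition \ref{prop:linear-cpf-invariant-cpf}, whose hypotheses are satisfied: $X:=:\left.\Gamma\right\backslash G$ is completely-solvable and $K$ is an invariant \para-complex structure. That proposition asserts, for every $\ell\in\N$ and for $\pm\in\{+,\,-\}$, that linear \Cp ness (respectively, linear \Cf ness) at the $\ell$-th stage of $K\in\End(\mathfrak{g})$ is equivalent to \Cp ness (respectively, \Cf ness) at the $\ell$-th stage of $K\in\End(TX)$. Applying this equivalence for every $\ell$ shows that $K\in\End(TX)$ is \Cf\ at every stage, and hence \Cpf\ at every stage.

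It remains only to deduce \pf ness, and this comes for free: having shown that $K$ is \Cf\ at every stage, Corollary \ref{cor:cf-every-stage} guarantees that $K$ is both \Cpf\ at every stage and \pf\ at every stage, which is exactly the assertion. The point worth stressing is that there is no genuine obstacle left at this stage: all the substance of the argument has already been discharged, on the one hand by the K\"unneth-type splitting underlying Proposition \ref{prop:g+g-comm} (the condition $\left[\mathfrak{g}^+,\,\mathfrak{g}^-\right]=\{0\}$ lets one write $\mathfrak{g}=\mathfrak{g}^+\times\mathfrak{g}^-$ as a product), and on the other hand by the averaging map $\mu$ together with Hattori's theorem underlying Proposition \ref{prop:linear-cpf-invariant-cpf}. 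The corollary is then a purely formal combination of these two facts.
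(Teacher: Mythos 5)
Your proposal is correct and follows exactly the route the paper intends: the paper derives this corollary by combining Proposition \ref{prop:g+g-comm} (giving linear \Cpf ness at every stage from $\left[\mathfrak{g}^+,\,\mathfrak{g}^-\right]=\{0\}$) with Proposition \ref{prop:linear-cpf-invariant-cpf} (transporting it to $X$), and the passage to \pf ness via Corollary \ref{cor:cf-every-stage} is the natural last step. No gaps.
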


Recall the following definition.

\begin{defi}\label{def:abelian}
 A linear \para-complex structure on a Lie algebra $\mathfrak{g}$ is said to be \emph{Abelian} if the induced decomposition $\mathfrak{g}=\mathfrak{g}^+\oplus\mathfrak{g}^-$ satisfies $\left[\mathfrak{g}^+,\,\mathfrak{g}^+\right]=\{0\}=\left[\mathfrak{g}^-,\,\mathfrak{g}^-\right]$, namely, $s^+=1=s^-$.
\end{defi}

\begin{rem}\label{oss 2.11}
Note that every linear \para-complex structure on a $4$-dimensional nilpotent Lie algebra is Abelian, as a consequence of Lemma \ref{lemma:s+-}.
\end{rem}

\begin{thm}\label{thm 2.12}
 Let $\mathfrak{g}$ be a Lie algebra and $K$ be a linear Abelian \para-complex structure on $\mathfrak{g}$. Then $K$ is linear \Cp\ at the $2$-nd stage.
\end{thm}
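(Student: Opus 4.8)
The plan is to exploit the Abelian hypothesis in order to pin down precisely the image of the exterior differential on $1$-forms, and then to conclude by a type argument. The decisive observation is that, for a linear Abelian \para-complex structure, the differential sends every $1$-form into the $K$-anti-invariant bidegree, that is,
\[ \de\left(\duale{\mathfrak{g}}\right) \;\subseteq\; \formepm{1}{1}\duale{\mathfrak{g}} \;=\; \wedge^{2\,-}_K\duale{\mathfrak{g}} \;. \]
Indeed, recalling that $\de_{\mathfrak{g}}\gamma(u,v)=-\gamma\left(\left[u,v\right]\right)$ for $\gamma\in\duale{\mathfrak{g}}$, the $\formepm{2}{0}\duale{\mathfrak{g}}$-component of $\de\gamma$ is obtained by evaluating on pairs $u,v\in\mathfrak{g}^+$, while the $\formepm{0}{2}\duale{\mathfrak{g}}$-component is obtained by evaluating on pairs $u,v\in\mathfrak{g}^-$; both vanish, since the Abelian condition gives $\left[\mathfrak{g}^+,\mathfrak{g}^+\right]=\{0\}=\left[\mathfrak{g}^-,\mathfrak{g}^-\right]$. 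Hence only the $\formepm{1}{1}\duale{\mathfrak{g}}$-component of $\de\gamma$ can be non-zero, which proves the claim (equivalently, $\del_+\lfloor_{\formepm{1}{0}\duale{\mathfrak{g}}}=0$ and $\del_-\lfloor_{\formepm{0}{1}\duale{\mathfrak{g}}}=0$).

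Granting the claim, the conclusion follows immediately. Let $\mathfrak{c}\in H^{2\,+}_K\left(\mathfrak{g};\R\right)\cap H^{2\,-}_K\left(\mathfrak{g};\R\right)$. By definition, $\mathfrak{c}$ admits a $\de$-closed $K$-invariant representative $\alpha^+\in\wedge^{2\,+}_K\duale{\mathfrak{g}}=\formepm{2}{0}\duale{\mathfrak{g}}\oplus\formepm{0}{2}\duale{\mathfrak{g}}$ and a $\de$-closed $K$-anti-invariant representative $\alpha^-\in\wedge^{2\,-}_K\duale{\mathfrak{g}}=\formepm{1}{1}\duale{\mathfrak{g}}$; since they represent the same class, $\alpha^+-\alpha^-=\de\beta$ for some $\beta\in\duale{\mathfrak{g}}$. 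By the claim, $\de\beta\in\formepm{1}{1}\duale{\mathfrak{g}}$, so the whole right-hand side $\alpha^-+\de\beta$ lies in $\formepm{1}{1}\duale{\mathfrak{g}}$. Projecting the identity $\alpha^+=\alpha^-+\de\beta$ onto the complementary summand $\formepm{2}{0}\duale{\mathfrak{g}}\oplus\formepm{0}{2}\duale{\mathfrak{g}}$ forces $\alpha^+=0$, whence $\mathfrak{c}=\left[\alpha^+\right]=0$; this is exactly the assertion that $K$ is linear \Cp\ at the $2$-nd stage.

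I do not expect any serious obstacle: the entire content is concentrated in the first step, the identification $\de\left(\duale{\mathfrak{g}}\right)\subseteq\formepm{1}{1}\duale{\mathfrak{g}}$, which is a direct consequence of the vanishing of $\left[\mathfrak{g}^+,\mathfrak{g}^+\right]$ and $\left[\mathfrak{g}^-,\mathfrak{g}^-\right]$. The only point deserving a little care is that the two representatives of $\mathfrak{c}$ may be chosen of pure type --- invariant inside $\formepm{2}{0}\duale{\mathfrak{g}}\oplus\formepm{0}{2}\duale{\mathfrak{g}}$ and anti-invariant inside $\formepm{1}{1}\duale{\mathfrak{g}}$ --- but this is built into the eigenspace description of $\wedge^{2\,\pm}_K\duale{\mathfrak{g}}$ recalled in Section \ref{sec:invariant-cpf-para-complex-structures-on-solvmanifolds}. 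It is worth noting that the argument uses purity alone and gives no information about fullness; indeed, the analogous fullness statement can fail, as Example \ref{es 2.5} already shows for an Abelian structure.
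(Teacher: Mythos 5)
Your proof is correct and follows essentially the same route as the paper: the paper's key step is precisely that the $K$-invariant projection $\pi^+_K$ annihilates $\imm\left(\de\colon\wedge^1\duale{\mathfrak{g}}\to\wedge^2\duale{\mathfrak{g}}\right)$, which is your observation that $\de\left(\duale{\mathfrak{g}}\right)\subseteq\formepm{1}{1}\duale{\mathfrak{g}}$, and the conclusion is then drawn in the identical way from $\gamma^+=\gamma^-+\de\alpha$. No gaps.
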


\begin{proof}
 Denote by $\pi^+\colon\wedge^\bullet\duale{\mathfrak{g}}\to\wedge^{\bullet\,+}_K\duale{\mathfrak{g}}$ the map that gives the $K$-invariant component of a given form.
 Recall that $\de\eta:=-\eta\left(\left[\sspace,\,\ssspace\right]\right)$ for every $\eta\in\wedge^1\duale{\mathfrak{g}}$; therefore, since $\left[\mathfrak{g}^+,\,\mathfrak{g}^+\right]=0$ and $\left[\mathfrak{g}^-,\,\mathfrak{g}^-\right]=0$, we have that $$\pi^+_K\left(\imm\left(\de\colon\wedge^1\duale{\mathfrak{g}}\to\wedge^2\duale{\mathfrak{g}}\right)\right)\;=\;\{0\} \;.$$
 Suppose that there exists $\left[\gamma^+\right]=\left[\gamma^-\right]\in H^{2\,+}_K\left(\mathfrak{g};\,\R\right)\cap H^{2\,-}_K\left(\mathfrak{g};\,\R\right)$, where $\gamma^+\in\wedge^{2\,+}_K\duale{\mathfrak{g}}$ and $\gamma^-\in\wedge^{2\,-}_K\duale{\mathfrak{g}}$; let $\alpha\in\wedge^1\duale{\mathfrak{g}}$ be such that $\gamma^+=\gamma^-+\de\alpha$. Since $\pi^+_K\left(\de\alpha\right)=0$, we have that $\gamma^+=0$ and hence $\left[\gamma^+\right]=0$, so $K$ is linear \Cp\ at the $2$-nd stage.
\end{proof}

\begin{rem}\label{rem:partial-abelian}
 We note that the condition of $K$ being Abelian in Theorem \ref{thm 2.12} can not be dropped, not even partially. In fact, Example \ref{es 2.17} shows that the Abelian assumption just on $\mathfrak{g}^-$ is not sufficient to have \Cp ness at the $2$-nd stage. Another example on a (non-unimodular) solvable Lie algebra is given below.
\end{rem}

\begin{ex}
{\itshape There exists a $4$-dimensional (non-unimodular) solvable Lie algebra with a non-Abelian \para-complex structure that is not linear \Cp\ at the $2$-nd stage.}\\
Consider the $4$-dimensional solvable Lie algebra defined by
$$ \mathfrak{g} \;:=\; \left(0,\;0,\;0,\; 13+34\right) \;;$$
note that $\mathfrak{g}$ is not unimodular, since $\de e^{124}=e^{1234}$, see Lemma \ref{lemma:unimod}.\\
Set the linear \para-complex structure
$$ K\;:=\; \left( +\;+\;-\;- \right) \;;$$
note that $K$ is not Abelian, since $\left[\mathfrak{g}^+,\,\mathfrak{g}^+\right]=0$ but $\left[\mathfrak{g}^-,\,\mathfrak{g}^-\right]=\R\left\langle e_3\right\rangle\neq \{0\}$.\\
A straightforward computation yields that $\mathfrak{g}$ is linear \Cf\ (in fact, $H^2\left(\mathfrak{g};\R\right)=\R\left\langle e^{12},\,e^{34}\right\rangle\oplus\left\langle e^{23}\right\rangle$ and $H^+_K\left(\mathfrak{g};\R\right)=\R\left\langle e^{12},\,e^{34}\right\rangle$, $H^-_K\left(\mathfrak{g};\R\right)=\R\left\langle e^{23} \right\rangle$) but linear non-\Cp, since
$$ H^{2\,+}_K\left(\mathfrak{g};\R\right) \;\ni\; \left[e^{34}\right] \;=\; \left[e^{34}-\de e^{4}\right] \;=\; -\left[e^{13}\right] \;\in\; H^{2\,-}_K\left(\mathfrak{g};\R\right) $$
and $\left[e^{34}\right]\neq 0$.
\end{ex}

Saying that a \para-complex structure on a solvmanifold is \emph{Abelian}, we will mean that the associated linear \para-complex structure on the corresponding Lie algebra is Abelian.\\
As a corollary of Theorem \ref{thm 2.12} and using Proposition \ref{prop:linear-cpf-invariant-cpf}, we get the following result.

\begin{cor}\label{cor:abelian}
 Let $X:=:\left.\Gamma\right\backslash G$ be a completely-solvable solvmanifold endowed with an invariant Abelian \para-complex structure $K$. Then $K$ is \Cp\ at the $2$-nd stage.
\end{cor}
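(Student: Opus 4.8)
The plan is to obtain the statement as a direct consequence of the two results that immediately precede it, namely Theorem \ref{thm 2.12} and Proposition \ref{prop:linear-cpf-invariant-cpf}. First I would unwind the terminology: by the convention fixed just before Corollary \ref{cor:abelian}, saying that the invariant \para-complex structure $K$ on $X$ is \emph{Abelian} means precisely that the associated linear \para-complex structure $K\in\End(\mathfrak{g})$ is Abelian in the sense of Definition \ref{def:abelian}, i.e. the eigenspace decomposition $\mathfrak{g}=\mathfrak{g}^+\oplus\mathfrak{g}^-$ satisfies $\left[\mathfrak{g}^+,\,\mathfrak{g}^+\right]=\{0\}=\left[\mathfrak{g}^-,\,\mathfrak{g}^-\right]$.

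The second step applies Theorem \ref{thm 2.12} to this linear structure: since $K\in\End(\mathfrak{g})$ is a linear Abelian \para-complex structure on the Lie algebra $\mathfrak{g}$, it is linear \Cp\ at the $2$-nd stage, that is,
$$ H^{2\,+}_K\left(\mathfrak{g};\R\right)\,\cap\,H^{2\,-}_K\left(\mathfrak{g};\R\right) \;=\; \left\{0\right\} \;.$$
Finally I would transfer this linear statement to the manifold $X$ by invoking Proposition \ref{prop:linear-cpf-invariant-cpf}. The hypothesis that $X:=:\left.\Gamma\right\backslash G$ is a completely-solvable solvmanifold is exactly what is required to apply that proposition, whose conclusion (for $\ell=2$) asserts that $K\in\End(\mathfrak{g})$ is linear \Cp\ at the $2$-nd stage if and only if $K\in\End(TX)$ is \Cp\ at the $2$-nd stage. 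Combining this equivalence with the previous step yields that $K\in\End(TX)$ is \Cp\ at the $2$-nd stage, which is the assertion.

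I do not expect any genuine obstacle here, as the corollary is a formal chaining of two already-established results. The only point meriting a moment of care is checking that Proposition \ref{prop:linear-cpf-invariant-cpf} is stated for every $\ell\in\N$ and in particular covers the purity case at the stage $\ell=2$; this is indeed the content of its last sentence, which records the equivalence of linear \Cp\ (respectively, linear \Cf) at the $\ell$-th stage with \Cp\ (respectively, \Cf) at the $\ell$-th stage. Hence no additional computation beyond the invocation of these two results is needed.
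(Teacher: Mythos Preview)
Your proposal is correct and matches the paper's approach exactly: the paper introduces Corollary \ref{cor:abelian} with the sentence ``As a corollary of Theorem \ref{thm 2.12} and using Proposition \ref{prop:linear-cpf-invariant-cpf}, we get the following result,'' and provides no further argument. Your write-up simply spells out this two-step chaining in detail.
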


\begin{rem}
 For a \para-complex structure on a compact manifold, being Abelian or being \Cp\ at the $2$-nd stage is not a sufficient condition to have \Cf ness at the $2$-nd stage. Indeed, Example \ref{es 2.5} provides a \para-complex structure on a $6$-dimensional solvmanifold that is Abelian, \Cp\ at the $2$-nd stage and non-\Cf\ at the $2$-nd stage.
\end{rem}

In particular, recalling Remark \ref{oss 2.11}, invariant \para-complex structures on $4$-di\-men\-sio\-nal nilmanifolds are \Cp\ at the $2$-nd stage. While for invariant Abelian \para-complex structures on higher-dimensional nilmanifolds we can not hope to have, in general, \Cf ness at the $2$-nd stage (see Example \ref{es 2.5}), for $4$-dimensional nilmanifolds we can prove that every invariant \para-complex structure is in fact also \Cf\ at the $2$-nd stage, see Theorem \ref{thm:4-dim}: to prove this fact, we need the following lemmata. The first one is a classical result.

\begin{lemma}(see, e.g., \cite{greub-halperin-vanstone})\label{lemma:unimod}
 Let $\mathfrak{g}$ be a unimodular Lie algebra of dimension $n$. Then
$$ \de\lfloor_{\wedge^{n-1}\duale{\mathfrak{g}}} \;=\; 0 \;.$$
\end{lemma}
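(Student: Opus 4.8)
The plan is to identify the space of $(n-1)$-forms with $\mathfrak{g}$ itself via contraction against a fixed top form, and then to invoke Cartan's formula on the Chevalley--Eilenberg complex $\left(\wedge^\bullet\duale{\mathfrak{g}},\,\de_{\mathfrak{g}}\right)$ to rewrite $\de$ in terms of the trace of the adjoint action, where the unimodularity hypothesis enters.

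First I would fix the generator $\Omega:=e^1\wedge\cdots\wedge e^n$ of the one-dimensional space $\wedge^n\duale{\mathfrak{g}}$ and consider the contraction map $\mathfrak{g}\to\wedge^{n-1}\duale{\mathfrak{g}}$, $X\mapsto\iota_X\Omega$. Since $\dim_\R\mathfrak{g}=\dim_\R\wedge^{n-1}\duale{\mathfrak{g}}=n$ and the map is manifestly injective, it is an isomorphism; hence every $(n-1)$-form is of the shape $\iota_X\Omega$, and it suffices to prove that $\de\left(\iota_X\Omega\right)=0$ for all $X\in\mathfrak{g}$.

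Next I would use the Cartan formula $\mathcal{L}_X=\de\,\iota_X+\iota_X\,\de$, valid on $\left(\wedge^\bullet\duale{\mathfrak{g}},\,\de_{\mathfrak{g}}\right)$, where $\mathcal{L}_X$ denotes the natural extension to $\wedge^\bullet\duale{\mathfrak{g}}$ of the coadjoint action $-\operatorname{ad}_X^*$. Because $\Omega$ is a top form one has $\de\Omega\in\wedge^{n+1}\duale{\mathfrak{g}}=\{0\}$, so $\iota_X\de\Omega=0$ and therefore $\de\left(\iota_X\Omega\right)=\mathcal{L}_X\Omega$. A direct expansion of $\mathcal{L}_X\left(e^1\wedge\cdots\wedge e^n\right)$ by the Leibniz rule, using $\mathcal{L}_Xe^i=-\sum_j(\operatorname{ad}_X)^i_j\,e^j$, leaves only the diagonal contributions and yields $\mathcal{L}_X\Omega=-\operatorname{tr}(\operatorname{ad}_X)\,\Omega$. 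By definition, $\mathfrak{g}$ is unimodular precisely when $\operatorname{tr}(\operatorname{ad}_X)=0$ for every $X\in\mathfrak{g}$; hence $\de\left(\iota_X\Omega\right)=0$ for all $X$, which is the assertion.

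The argument is classical and essentially computational; the only points deserving care are the sign in $\mathcal{L}_X\Omega=-\operatorname{tr}(\operatorname{ad}_X)\,\Omega$ (to be reconciled with the convention $\de_{\mathfrak{g}}\alpha=-\alpha([\sspace,\ssspace])$ adopted in the paper) and the remark that the contraction map genuinely exhausts $\wedge^{n-1}\duale{\mathfrak{g}}$. Should one prefer to dispense with the Cartan formalism, the same conclusion follows by brute force: applying the derivation $\de$ to a basis element $e^1\wedge\cdots\wedge\widehat{e^i}\wedge\cdots\wedge e^n$ of $\wedge^{n-1}\duale{\mathfrak{g}}$, the only surviving term comes from the coefficient of $e^i\wedge e^j$ in $\de e^j$ for the various $j\neq i$, and summing these coefficients reproduces $\pm\operatorname{tr}(\operatorname{ad}_{e_i})\,\Omega$, which again vanishes by unimodularity.
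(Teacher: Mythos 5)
Your proof is correct. Note, however, that the paper does not actually prove this lemma: it is stated as a classical fact with a pointer to Greub--Halperin--Vanstone, so there is no in-paper argument to compare against. Your route --- identifying $\wedge^{n-1}\duale{\mathfrak{g}}$ with $\mathfrak{g}$ via $X\mapsto\iota_X\Omega$, applying the algebraic Cartan formula $\mathcal{L}_X=\de\,\iota_X+\iota_X\,\de$ on the Chevalley--Eilenberg complex, and using $\mathcal{L}_X\Omega=-\operatorname{tr}(\operatorname{ad}_X)\,\Omega$ together with the characterization of unimodularity as $\operatorname{tr}(\operatorname{ad}_X)=0$ for all $X$ --- is exactly the standard textbook proof, and the coordinate computation you sketch at the end is a correct elementary substitute; in fact that second version is the one most directly compatible with the paper's convention $\de_{\mathfrak{g}}\alpha=-\alpha([\sspace,\ssspace])$ and with the way Lemma \ref{lemma:unimod} is subsequently used in the proof of Lemma \ref{lemma:n0-puro}.
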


\begin{lemma}\label{lemma:n0-puro}
 Let $\mathfrak{g}$ be a unimodular Lie algebra of dimension $2n$ endowed with an Abelian linear \para-complex structure $K$. Then
$$ \de\lfloor_{\formepm{n}{0}\duale{\mathfrak{g}}\,\oplus\,\formepm{0}{n}\duale{\mathfrak{g}}} \;=\; 0 \;.$$
\end{lemma}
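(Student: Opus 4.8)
The plan is to reduce the statement to checking that the two generators of the one\-/dimensional spaces $\formepm{n}{0}\duale{\mathfrak{g}}$ and $\formepm{0}{n}\duale{\mathfrak{g}}$ are $\de$\-/closed. Fix a basis $\{e_1,\dots,e_n\}$ of $\mathfrak{g}^+$ and $\{e_{n+1},\dots,e_{2n}\}$ of $\mathfrak{g}^-$, with dual basis $\{e^1,\dots,e^{2n}\}$, and set $\omega_+:=e^1\wedge\cdots\wedge e^n$ and $\omega_-:=e^{n+1}\wedge\cdots\wedge e^{2n}$, so that $\formepm{n}{0}\duale{\mathfrak{g}}=\R\,\omega_+$ and $\formepm{0}{n}\duale{\mathfrak{g}}=\R\,\omega_-$. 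By symmetry (interchanging the roles of $\mathfrak{g}^+$ and $\mathfrak{g}^-$) it suffices to prove $\de\omega_+=0$.

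First I would record the type of $\de$ on $1$\-/forms. For $\alpha\in\duale{(\mathfrak{g}^+)}$ one has $\de\alpha(\sspace,\ssspace)=-\alpha\left(\left[\sspace,\ssspace\right]\right)$, so the $\formepm{2}{0}$\-/component of $\de\alpha$ is governed by $\left[\mathfrak{g}^+,\mathfrak{g}^+\right]$ and its $\formepm{0}{2}$\-/component by $\left[\mathfrak{g}^-,\mathfrak{g}^-\right]$; since $K$ is Abelian both brackets vanish, whence $\de\duale{(\mathfrak{g}^+)}\subseteq\formepm{1}{1}\duale{\mathfrak{g}}$, and symmetrically $\de\duale{(\mathfrak{g}^-)}\subseteq\formepm{1}{1}\duale{\mathfrak{g}}$. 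Applying the Leibniz rule to $\omega_+$ and using that wedging $\de e^k\in\formepm{1}{1}\duale{\mathfrak{g}}$ with the remaining plus\-/factors $e^1,\dots,\widehat{e^k},\dots,e^n$ annihilates every $\formepm{1}{1}$\-/summand except the one containing $e^k$, I obtain that $\de\omega_+$ lies in $\formepm{n}{1}\duale{\mathfrak{g}}$ and can be written as
\begin{equation*}
\de\omega_+ \;=\; \sum_{j>n}\lambda_j\,\omega_+\wedge e^j
\end{equation*}
for suitable scalars $\lambda_j\in\R$.

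The key step is then to show $\lambda_j=0$ for each $j>n$ by feeding the unimodularity hypothesis through Lemma \ref{lemma:unimod}. For fixed $j>n$, let $\sigma_j\in\wedge^{n-1}\duale{(\mathfrak{g}^-)}$ be obtained from $\omega_-$ by deleting the factor $e^j$, and consider the $(2n-1)$\-/form $\tau_j:=\omega_+\wedge\sigma_j$. Since $\mathfrak{g}$ is unimodular, Lemma \ref{lemma:unimod} gives $\de\tau_j=0$; expanding by Leibniz,
\begin{equation*}
0 \;=\; \de\tau_j \;=\; \de\omega_+\wedge\sigma_j \,+\, (-1)^{n}\,\omega_+\wedge\de\sigma_j \;.
\end{equation*}
Here $\de\sigma_j\in\formepm{1}{n-1}\duale{\mathfrak{g}}$ by the type statement above, so $\omega_+\wedge\de\sigma_j$ would be of type $\formepm{n+1}{\,\sspace}$ and therefore vanishes for degree reasons ($n+1>\dim_\R\mathfrak{g}^+$). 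Hence $\de\omega_+\wedge\sigma_j=0$; but $\omega_+\wedge e^i\wedge\sigma_j$ is nonzero only for $i=j$, so this identity reads, up to sign, $\lambda_j\,e^1\wedge\cdots\wedge e^{2n}=0$, forcing $\lambda_j=0$. Thus $\de\omega_+=0$, and the symmetric argument yields $\de\omega_-=0$, which proves the claim.

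I expect the only delicate points to be the type bookkeeping in the two displayed expansions, namely verifying that $\de\omega_+$ has no component outside $\formepm{n}{1}\duale{\mathfrak{g}}$ and that the term $\omega_+\wedge\de\sigma_j$ drops out by degree, together with the routine sign tracking. The conceptual content is carried entirely by the Abelian condition, which forces the $(2,0)$\-/ and $(0,2)$\-/parts of $\de$ on $1$\-/forms to vanish, and by Lemma \ref{lemma:unimod}: equivalently, the latter expresses the vanishing of $\operatorname{tr}\mathrm{ad}_{e_j}$, which, combined with $\left[\mathfrak{g}^-,\mathfrak{g}^-\right]=\{0\}$ (so that $\operatorname{tr}\left(\mathrm{ad}_{e_j}|_{\mathfrak{g}^-}\right)=0$), forces the partial trace $\operatorname{tr}\left(\mathrm{ad}_{e_j}|_{\mathfrak{g}^+}\right)$, which equals $\lambda_j$ up to sign, to be zero.
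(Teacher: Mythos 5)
Your proposal is correct and follows essentially the same route as the paper: both use the Abelian condition to show that $\de$ maps $1$-forms into $\formepm{1}{1}\duale{\mathfrak{g}}$, expand $\de\omega_+$ by Leibniz to get coefficients $\lambda_j$, and identify each $\lambda_j$ (up to sign) with the coefficient of $\de$ applied to the $(2n-1)$-form obtained by deleting one minus-factor from the volume form, which vanishes by Lemma \ref{lemma:unimod}. The only difference is presentational — the paper writes the structure constants $a^j_{hk}$ explicitly and reads off $\sum_\ell a^\ell_{\ell k}=0$ directly, whereas you carry out the same cancellation via the Leibniz expansion of $\de\tau_j$ — so no further comment is needed.
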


\begin{proof}
 Consider the bases
$$ \duale{\left(\mathfrak{g}^+\right)} \;=\; \R\left\langle e^1,\, \ldots,\, e^n\right\rangle \qquad \text{ and }\qquad \duale{\left(\mathfrak{g}^-\right)} \;=\; \R\left\langle f^1,\, \ldots,\, f^n\right\rangle $$
where $\mathfrak{g}=\mathfrak{g}^+\oplus\mathfrak{g}^-$ is the decomposition into eigenspaces induced by $K$. Being $K$ Abelian, the general structure equations are of the form
$$
\left\{
\begin{array}{rcl}
 \de e^j &=:& \sum_{h,\,k=1}^n a^j_{hk}\, e^h\wedge f^k \\[5pt]
 \de f^j &=:& \sum_{h,\,k=1}^n b^j_{hk}\, e^h\wedge f^k
\end{array}
\right.
$$
varying $j\in\{1,\ldots,n\}$ and where $\left\{a^j_{hk},\,b^j_{hk}\right\}_{j,h,k}\subset\R$.\\
A straightforward computation yields
$$ \de\left(e^1\wedge\cdots\wedge e^n\right) \;=\; \left(-1\right)^n \sum_{k=1}^n \left(\sum_{\ell=1}^n a^\ell_{\ell k}\right) e^1\wedge\cdots\wedge e^n\wedge f^k $$
where, for any $k\in\{1,\ldots,n\}$,
$$ \sum_{\ell=1}^n a^\ell_{\ell k} \;=\; 0 \;,$$
since it is the coefficient of
$$ \de\left( e^1\wedge\cdots\wedge e^n\wedge f^1\wedge\cdots\wedge f^{k-1}\wedge f^{k+1}\wedge\cdots\wedge f^n\right) \;=\; 0 \;, $$
by Lemma \ref{lemma:unimod}.
\end{proof}

We can now prove the following result.

\begin{thm}\label{thm:4-dim}
 Every invariant \para-complex structure on a $4$-dimensional nilmanifold is \Cpf\ at the $2$-nd stage and hence also \pf\ at the $2$-nd stage. 
\end{thm}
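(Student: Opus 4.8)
The plan is to reduce the statement to a linear-algebra computation on the $4$-dimensional nilpotent Lie algebra $\mathfrak{g}$ and then exhaust the finitely many cases. By Proposition \ref{prop:linear-cpf-invariant-cpf}, since a $4$-dimensional nilmanifold is in particular a completely-solvable solvmanifold, it suffices to prove that every invariant (linear) \para-complex structure $K$ on $\mathfrak{g}$ is linear \Cpf\ at the $2$-nd stage; the \pf ness at the $2$-nd stage then follows from Proposition \ref{prop:cpf-pf} together with Poincar\'e duality, which in dimension $4$ identifies the $2$-nd stage with its own complementary stage. The \Cp ness half is already in hand: by Remark \ref{oss 2.11} every linear \para-complex structure on a $4$-dimensional nilpotent Lie algebra is Abelian, so Theorem \ref{thm 2.12} gives that $K$ is linear \Cp\ at the $2$-nd stage for free. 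Thus the entire content of the theorem is to establish linear \Cf ness at the $2$-nd stage.

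To prove \Cf ness I would argue that every class in $H^2(\mathfrak{g};\R)$ admits a representative that is either $K$-invariant or $K$-anti-invariant (equivalently, that the sum $H^{2\,+}_K(\mathfrak{g};\R)+H^{2\,-}_K(\mathfrak{g};\R)$ fills $H^2(\mathfrak{g};\R)$). Fix bases $\duale{(\mathfrak{g}^+)}=\R\langle e^1,e^2\rangle$ and $\duale{(\mathfrak{g}^-)}=\R\langle f^1,f^2\rangle$ as in Lemma \ref{lemma:n0-puro}. The space $\wedge^2\duale{\mathfrak{g}}$ splits as $\formepm{2}{0}\oplus\formepm{1}{1}\oplus\formepm{0}{2}$, where the invariant part is $\wedge^{2\,+}_K\duale{\mathfrak{g}}=\formepm{2}{0}\duale{\mathfrak{g}}\oplus\formepm{0}{2}\duale{\mathfrak{g}}$ (spanned by $e^1\wedge e^2$ and $f^1\wedge f^2$) and the anti-invariant part is $\wedge^{2\,-}_K\duale{\mathfrak{g}}=\formepm{1}{1}\duale{\mathfrak{g}}$ (spanned by the four forms $e^h\wedge f^k$). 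The key structural input is Lemma \ref{lemma:n0-puro}: since $\mathfrak{g}$ is nilpotent, hence unimodular by Lemma \ref{lemma:milnor}, and $K$ is Abelian, we have $\de\lfloor_{\formepm{2}{0}\duale{\mathfrak{g}}\oplus\formepm{0}{2}\duale{\mathfrak{g}}}=0$, so both generators $e^1\wedge e^2$ and $f^1\wedge f^2$ of the invariant part are already $\de$-closed and therefore define classes in $H^{2\,+}_K(\mathfrak{g};\R)$.

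It then remains to handle the anti-invariant part. A closed $2$-form $\beta$ decomposes as $\beta=\beta^++\beta^-$ with $\beta^+\in\wedge^{2\,+}_K\duale{\mathfrak{g}}$ and $\beta^-\in\wedge^{2\,-}_K\duale{\mathfrak{g}}$; because $\de\beta^+=0$ automatically by Lemma \ref{lemma:n0-puro}, the closedness of $\beta$ forces $\de\beta^-=0$ as well. Hence $[\beta]=[\beta^+]+[\beta^-]$ with $[\beta^+]\in H^{2\,+}_K(\mathfrak{g};\R)$ and $[\beta^-]\in H^{2\,-}_K(\mathfrak{g};\R)$, which is exactly linear \Cf ness at the $2$-nd stage. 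The main obstacle — and the place where nilpotency, Abelianity, and the dimension hypothesis all genuinely enter — is precisely the vanishing $\de\lfloor_{\formepm{2}{0}\oplus\formepm{0}{2}}=0$ supplied by Lemma \ref{lemma:n0-puro}; without it the invariant components of closed forms need not be closed, and indeed Example \ref{es 2.8} (non-integrable) and Example \ref{es 2.5} (dimension $6$) show that dropping integrability or raising the dimension destroys \Cf ness. With this vanishing established, the argument closes formally via the direct-sum decomposition of $\wedge^2\duale{\mathfrak{g}}$ together with the already-proven \Cp ness, giving the claimed cohomological splitting $H^2(\mathfrak{g};\R)=H^{2\,+}_K(\mathfrak{g};\R)\oplus H^{2\,-}_K(\mathfrak{g};\R)$ and hence, by Proposition \ref{prop:linear-cpf-invariant-cpf}, the \Cpf ness of $K$ on $X$.
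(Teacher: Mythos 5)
Your proposal is correct and follows essentially the same route as the paper: reduction to the Lie algebra via Proposition \ref{prop:linear-cpf-invariant-cpf}, \Cp ness from Remark \ref{oss 2.11} together with Theorem \ref{thm 2.12} (equivalently Corollary \ref{cor:abelian}), and \Cf ness from the vanishing $\de\lfloor_{\formepm{2}{0}\duale{\mathfrak{g}}\,\oplus\,\formepm{0}{2}\duale{\mathfrak{g}}}=0$ of Lemma \ref{lemma:n0-puro}, which forces the anti-invariant component of a closed invariant $2$-form to be closed. The opening mention of exhausting finitely many cases is never needed, since the argument you actually give is the uniform one the paper uses.
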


\begin{proof}
 The \Cp ness at the $2$-nd stage follows from Remark \ref{oss 2.11} and Corollary \ref{cor:abelian}.
 From Lemma \ref{lemma:n0-puro} one gets that, on every $4$-dimensional \para-complex nilmanifold, the \para-complex invariant component of an invariant $2$-form is closed and hence also the \para-complex anti-invariant component of a closed invariant $2$-form is closed, hence the linear \para-complex structure is linear \Cf\ at the $2$-nd stage; by Proposition \ref{prop:linear-cpf-invariant-cpf}, the \para-complex structure is hence \Cf\ at the $2$-nd stage. Lastly, the \pf ness at the $2$-nd stage follows from Proposition \ref{prop:cpf-pf}.
\end{proof}

\begin{rem}
 We note that Theorem \ref{thm:4-dim} is optimal. Indeed, we can not grow dimension (see Example \ref{es 2.5} and Example \ref{es 2.6}), nor change the nilpotent hypothesis with solvable condition (see Example \ref{es 2.17}), nor drop the integrability condition on the \para-complex structure (see Example \ref{es 2.8}).
\end{rem}

\section{Small deformations of \para-complex structures}\label{sec:deformations}
In this section, we study explicit examples of deformations of \para-complex structures on nilmanifolds and solvmanifolds. We refer to \cite{medori-tomassini, rossi} for more results about deformations of \para-complex structures.

The following example shows a curve $\left\{K_t\right\}_{t\in\R}$ of \para-complex structure on a $4$-dimensional solvmanifold; while $K_0$ is linear \Cpf\ at the $2$-nd stage and admits a \para-K\"ahler structure, for $t\neq 0$ one proves that $K_t$ is neither \Cp\ at the $2$-nd stage nor \Cf\ at the $2$-nd stage and it does not admit a \para-K\"ahler structure. In particular, this curve provides an example of the instability of \para-K\"ahlerness under small deformations of the \para-complex structure and it proves also that the nilpotency condition in Theorem \ref{thm:4-dim} can not be dropped out.

\begin{ex}\label{es 2.17}{\itshape There exists a $4$-dimensional solvmanifold endowed with an invariant \para-complex structure such that it is \Cpf\ at the $2$-nd stage, it admits a \para-K\"ahler structure and it has small deformations that are neither \para-K\"ahler nor \Cpf\ at the $2$-nd stage.}\\
Consider the $4$-dimensional solvmanifold defined by
$$ X \;:=\; \left(0,\,0,\,23,\,-24 \right) $$
(see, e.g., \cite{bock}).\\
By Hattori's theorem (see \cite{hattori}), it is straightforward to compute
$$ H^2_{dR}(X;\R) \;=\; \R\left\langle e^{12},\,e^{34}\right\rangle \;.$$
For every $t\in\R$, consider the invariant \para-complex structure
$$
K_t \;:=\;
\left(
\begin{array}{cccc}
 -1 & 0 & 0 & 0\\
 0 & 1 & 0 & -2t \\
 0 & 0 & 1 & 0 \\
 0 & 0 & 0 & -1
\end{array}
\right) \;.
$$
In particular, for $t=0$, we have
$$ K_0 \;=\; \left( - \, + \, + \, -\right) \;. $$
It is straightforward to check that $K_0$ is \Cpf\ at the $2$-nd stage (note however that $K_0$ is not Abelian): in fact,
\begin{equation*}
 H^{2\,+}_{K_0}(X;\R) \;=\; \left\{0\right\} \qquad \text{ and } \qquad  H^{2\,-}_{K_0}(X;\R) \;=\; H^2_{dR}(X;\R) \;;
\end{equation*}
in particular, we have
$$ \dim_\R H^{2\,+}_{K_0}(X;\R) \;=\; 0\;, \qquad \dim_\R H^{2\,-}_{K_0}(X;\R) \;=\; 2.$$
For every $t\in\R$, we have that
\begin{equation*}
 \mathfrak{g}^+_{K_t}\;=\;\R\left\langle e_2, \, e_3 \right\rangle \quad \text{ and }\quad \mathfrak{g}^-_{K_t}\;=\;\R\left\langle e_1,\,e_4+t\,e_2\right\rangle \;:
\end{equation*}
in particular, $\left[\mathfrak{g}^+_{K_t},\,\mathfrak{g}^+_{K_t}\right]=\R\left\langle e_3\right\rangle\subseteq \mathfrak{g}^+_{K_t}$ and $\left[\mathfrak{g}^-_{K_t},\,\mathfrak{g}^-_{K_t}\right]=\left\{0\right\}$, which proves the integrability of $K_t$, for every $t\in\R$.\\
Furthermore, for $t\neq0$, we get
\begin{eqnarray*}
H^{2\,-}_{K_t}\left(X;\R\right)\ni\left[e^{34}\right] &=&\left[e^{34}+\frac{1}{t}\,\de e^{3}\right] \;=\; \left[e^{34}+\frac{1}{t}\,(e^{23}+t\,e^{43}-t\,e^{43})\right]\\[5pt]
&=&\left[\frac{1}{t}\,(e^2-t\,e^4)\wedge e^3\right] \in H^{2\,+}_{K_t}\left(X;\R\right)
\end{eqnarray*}
and therefore we have that
$$ 0\;\neq\; \left[e^{34}\right] \;\in\; H^{2\,-}_{K_t}\left(X;\R\right)\,\cap\, H^{2\,+}_{K_t}\left(X;\R\right) \;.$$
In particular, for $t\neq0$, it follows that $K_t$ is not \Cp\ at the $2$-nd stage and hence it is not \Cf\ at the $2$-nd stage, as a consequence of Proposition \ref{prop:cpf-pf} (in fact, no invariant representative in the class $\left[e^{12}\right]=\left[e^1\wedge(e^2-te^4)+te^{14}\right]$ is of pure type with respect to $K_t$, the space of invariant $\de$-exact $2$-forms being $\R\left\langle (e^2-t\,e^4)\wedge e^3-t\,e^{34},\, (e^2-t\,e^4)\wedge e^4\right\rangle$). Therefore, for $t\neq 0$, we have
$$ \dim_\R H^{2\,+}_{K_t}(X;\R) \;=\; 1\;, \qquad \dim_\R H^{2\,-}_{K_t}(X;\R) \;=\; 1.$$
Note that, in this example, for every $t\in\R$ one has $s\left(\mathfrak{g}^-_{K_t}\right)=0$ and $s\left(\mathfrak{g}^+_{K_t}\right)=1$ but for $t\neq0$ the \para-complex structure $K_t$ is not \Cp\ at the $2$-nd stage, therefore the Abelian condition on just $\mathfrak{g}^-$ in Theorem \ref{thm 2.12} is not sufficient to have \Cp ness at the $2$-nd stage, as  announced in Remark \ref{rem:partial-abelian}.\\
Note moreover that, in this example, the functions
$$ \R\ni t\mapsto \dim_\R H^{2\,+}_{K_t}(X;\R)\in\N \qquad \text{ and } \qquad \R\ni t\mapsto \dim_\R H^{2\,-}_{K_t}(X;\R)\in\N $$
are, respectively, lower-semi-continuous and upper-semi-continuous.\\
Furthermore, we note that $X$ admits a symplectic form $\omega:=e^{12}+e^{34}$ which is compatible with the \para-complex structure $K_0$: therefore, $\left(X,\,K_0,\,\omega\right)$ is a \para-K\"ahler manifold. Instead, for $t\neq0$, one has $H^-_{K_t}\left(X;\R\right)=\R\left\langle e^{34}\right\rangle$ and therefore, if a $K_t$-compatible symplectic form $\omega_t$ existed, it should be in the same cohomology class as $e^{34}$ and then it should satisfy
$$ \Vol(X) \;=\; \int_X \omega_t\wedge\omega_t \;=\; \int_X e^{34}\wedge e^{34} \;=\; 0 \;, $$
which is not possible;
therefore, for $t\neq0$, there is
no symplectic structure compatible with the \para-complex structure $K_t$: in particular, $\left(X,\,K_t\right)$ admits no \para-\kal\ structure.
\end{ex}

The previous example proves the following result, giving a strong difference between the \para-complex and the complex cases (compare with the stability result of K\"ahlerness proved by K. Kodaira and D.~C. Spencer in \cite{kodaira-spencer-3}).
\begin{thm}\label{thm:para-kahler-deformations}
 The property of being \para-\kal\ is not stable under small deformations of the \para-complex structure.
\end{thm}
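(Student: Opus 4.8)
The plan is to prove Theorem \ref{thm:para-kahler-deformations} by exhibiting a single explicit family of \para-complex structures that are \para-\kal\ at one parameter value but fail to admit any compatible symplectic form for all nearby values. This is the natural strategy: stability statements are refuted by counterexamples, so it suffices to produce one curve $\left\{K_t\right\}_{t\in\R}$ of \para-complex structures on a fixed compact manifold $X$ such that $K_0$ is \para-\kal\ while $K_t$ is not \para-\kal\ for $t\neq 0$, with $t=0$ a limit of the deformation parameters. I would take $X$ to be a $4$-dimensional solvmanifold, where all the relevant cohomology is computable via Hattori's theorem, and build the family $K_t$ so that the deformation visibly moves cohomology classes between the invariant and anti-invariant subspaces.

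First I would set up the concrete example: the solvmanifold $X:=\left(0,\,0,\,23,\,-24\right)$ together with the linear family $K_t$ written in the excerpt, and verify that each $K_t$ is genuinely a \para-complex structure by checking integrability through the bracket relations $\left[\mathfrak{g}^+_{K_t},\,\mathfrak{g}^+_{K_t}\right]\subseteq\mathfrak{g}^+_{K_t}$ and $\left[\mathfrak{g}^-_{K_t},\,\mathfrak{g}^-_{K_t}\right]=\left\{0\right\}$. Next I would compute $H^2_{dR}(X;\R)=\R\left\langle e^{12},\,e^{34}\right\rangle$ and identify the \para-complex subgroups at $t=0$, showing that $H^{2\,-}_{K_0}(X;\R)$ is the whole second cohomology while $H^{2\,+}_{K_0}(X;\R)=\{0\}$; then I would exhibit the \para-\kal\ form $\omega:=e^{12}+e^{34}$, which is $K_0$-anti-invariant and symplectic, confirming that $\left(X,\,K_0,\,\omega\right)$ is \para-\kal.

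The crucial step is the obstruction for $t\neq 0$. Here I would compute $H^{2\,-}_{K_t}(X;\R)=\R\left\langle e^{34}\right\rangle$ and then argue by contradiction: a $K_t$-compatible symplectic form $\omega_t$ is anti-invariant and closed, hence its class lies in $H^{2\,-}_{K_t}(X;\R)$, forcing $\left[\omega_t\right]=\lambda\,\left[e^{34}\right]$ for some scalar. The volume integral $\int_X\omega_t\wedge\omega_t$ would then equal (a multiple of) $\int_X e^{34}\wedge e^{34}=0$, contradicting nondegeneracy of a symplectic form on a compact manifold. I expect this cohomological volume-obstruction to be the main point: it is what converts the \emph{pointwise} movement of the anti-invariant subspace (a linear-algebra phenomenon) into a genuine global non-existence statement, and it relies essentially on the fact that on a $4$-manifold the top self-intersection of a symplectic class must be nonzero while $\left[e^{34}\right]^2=0$.

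The remaining work is bookkeeping: I would invoke the computation that for $t\neq 0$ the class $\left[e^{34}\right]$ in fact lies in $H^{2\,+}_{K_t}(X;\R)\cap H^{2\,-}_{K_t}(X;\R)$, so that $K_t$ fails to be \Cp\ at the $2$-nd stage, which both locates the $2$-dimensional $H^2_{dR}$ among the $\pm$-pieces and makes the anti-invariant group $1$-dimensional. Since $t=0$ is a limit point of the deformation and \para-\kal lerness holds only there, the family witnesses instability, completing the proof. The only genuine subtlety to watch is ensuring integrability persists for all $t$ and that the harmonic/invariant representatives correctly compute the de Rham groups via complete solvability, both of which are handled by the structure equations and Hattori's theorem.
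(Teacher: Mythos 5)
Your proposal is correct and follows essentially the same route as the paper: the paper's proof of Theorem \ref{thm:para-kahler-deformations} is precisely Example \ref{es 2.17}, i.e.\ the solvmanifold $\left(0,\,0,\,23,\,-24\right)$ with the family $K_t$, the \para-\kal\ form $\omega=e^{12}+e^{34}$ at $t=0$, and the obstruction for $t\neq0$ that any $K_t$-compatible symplectic form would be cohomologous to a multiple of $e^{34}$, whose self-intersection vanishes. No gaps.
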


Furthermore, Example \ref{es 2.17} proves also the following instability result (a similar result holds also in the complex case, see \cite[Theorem 3.2]{angella-tomassini}).

\begin{prop}\label{prop:def}
 The property of being \Cp\ at the $2$-nd stage or \Cf\ at the $2$-nd stage is not stable under small deformations of the \para-complex structure.
\end{prop}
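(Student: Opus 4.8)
The plan is to read the statement off the explicit curve already constructed in Example~\ref{es 2.17}, which contains all the computations needed; the proof is then essentially a matter of comparing the central fibre with the nearby ones. First I would recall the setup of that example: on the $4$-dimensional solvmanifold $X:=\left(0,\,0,\,23,\,-24\right)$ one has the curve $\left\{K_t\right\}_{t\in\R}$ of invariant \para-complex structures, each of which is integrable by the eigenspace description $\mathfrak{g}^+_{K_t}=\R\left\langle e_2,\,e_3\right\rangle$ and $\mathfrak{g}^-_{K_t}=\R\left\langle e_1,\,e_4+t\,e_2\right\rangle$, and which depends smoothly (indeed real-analytically) on $t$ with $K_t\to K_0$ as $t\to 0$. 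Thus $\left\{K_t\right\}$ is a genuine small deformation of the \para-complex structure $K_0$, precisely the kind of family whose stability we want to test.

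Next I would extract the cohomological behaviour at the two kinds of fibre, again quoting Example~\ref{es 2.17}. At $t=0$ one has $H^2_{dR}(X;\R)=H^{2\,+}_{K_0}(X;\R)\oplus H^{2\,-}_{K_0}(X;\R)$, so $K_0$ is \Cpf\ at the $2$-nd stage and, in particular, both \Cp\ and \Cf\ at the $2$-nd stage. For every $t\neq 0$ --- hence for $t$ arbitrarily close to $0$ --- the same example exhibits a nonzero class $0\neq\left[e^{34}\right]\in H^{2\,+}_{K_t}(X;\R)\cap H^{2\,-}_{K_t}(X;\R)$, so $K_t$ is not \Cp\ at the $2$-nd stage. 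Since $\dim X=4$, chaining the implications of Proposition~\ref{prop:cpf-pf} with $\ell=2=2n-\ell$ shows that being \Cf\ at the $2$-nd stage forces being \Cp\ at the $2$-nd stage; its contrapositive then turns the failure of \Cp ness into a failure of \Cf ness, so $K_t$ is not \Cf\ at the $2$-nd stage either.

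Combining the two observations yields the statement: along the curve $\left\{K_t\right\}$ both \Cp ness and \Cf ness at the $2$-nd stage hold at $t=0$ but fail on every punctured neighbourhood of $0$, which is exactly the asserted instability under small deformations. The only point demanding care --- and it is already settled inside Example~\ref{es 2.17} --- is the verification that $\left\{K_t\right\}$ is a legitimate family of \emph{integrable} \para-complex structures varying smoothly in the parameter, so that ``small deformation'' is justified; the remaining content is the explicit second-cohomology computation carried out there. I expect no genuine obstacle beyond this bookkeeping, since the instability is produced by an honest example rather than by a structural argument.
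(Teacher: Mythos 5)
Your proposal is correct and takes essentially the same route as the paper, whose proof of Proposition~\ref{prop:def} consists precisely in invoking Example~\ref{es 2.17}: $K_0$ is \Cpf\ at the $2$-nd stage, while for $t\neq 0$ the class $\left[e^{34}\right]$ lies in $H^{2\,+}_{K_t}(X;\R)\cap H^{2\,-}_{K_t}(X;\R)$, so $K_t$ is not \Cp\ and hence, by Proposition~\ref{prop:cpf-pf} with $\ell=2=2n-\ell$, not \Cf\ at the $2$-nd stage. Your explicit check of integrability via the eigenspaces and the deduction of non-\Cf ness from non-\Cp ness are exactly the steps carried out inside that example.
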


We recall that T. Dr\v{a}ghici, T.-J. Li and W. Zhang proved in \cite[Theorem 5.4]{draghici-li-zhang} that, given a curve of almost complex structures on a $4$-dimensional compact manifold, the dimension of the almost complex anti-invariant subgroup of the real second de Rham cohomology group is upper-semi-continuous and hence (as a consequence of \cite[Theorem 2.3]{draghici-li-zhang}) the dimension of the almost complex invariant subgroup of the real second de Rham cohomology group is lower-semi-continuous. This result holds no more true in dimension greater than $4$ (see \cite{angella-tomassini-2}).\\
We provide two examples showing that the dimensions of the \para-complex invariant and anti-invariant subgroups of the cohomology can jump along a curve of \para-complex structures.

\begin{ex}\label{ex:def-jump-sci}
{\itshape There exists a curve of \para-complex structures on a $6$-dimensional nilmanifold such that the dimensions of the \para-complex invariant and anti-invariant subgroups of the real second de Rham cohomology group jump (lo\-wer-se\-mi-con\-ti\-nuou\-sly) along the curve.}\\
Consider the $6$-dimensional nilmanifold
$$ X\;:=\; \left(0,\,0,\,0,\,12,\,13,\,24\right) \;.$$
By Nomizu's theorem (see \cite{nomizu}), it is straightforward to compute
$$ H^2_{dR}(X;\R) \;=\; \R\left\langle e^{14},\, e^{15},\, e^{23},\, e^{26},\, e^{35},\, e^{25}+e^{34} \right\rangle \;.$$
For every $t\in\left[0,\,1\right]$, consider the invariant \para-complex structure
$$
K_t \;:=\;
\left(
\begin{array}{cc|cc|cc}
 1 & & & & & \\
 & -1 & & & & \\
\hline
 & & \frac{(1-t)^2-t^2}{(1-t)^2+t^2} & \frac{2t(1-t)}{(1-t)^2+t^2} & & \\ 
 & & \frac{2t(1-t)}{(1-t)^2+t^2} & -\frac{(1-t)^2-t^2}{(1-t)^2+t^2} & & \\ 
\hline
 & & & & 1 & \\
 & & & & & -1
\end{array}
\right) \;.
$$
For $0\leq t\leq 1$, one checks that
$$ \mathfrak{g}^+_{K_t} \;=\; \R\left\langle e_1,\, (1-t)\,e_3+t\,e_4,\, e_5 \right\rangle \quad \text{ and } \quad \mathfrak{g}^-_{K_t} \;=\; \R\left\langle e_2,\, t\,e_3-(1-t)\,e_4,\, e_6 \right\rangle \;;$$
one can straightforwardly check that the integrability condition of $K_t$ is satisfied for every $t\in\left[0,\,1\right]$.\\
In particular, for $t\in\{0,\,1\}$, one has
$$ K_0 \;=\; \left( + \, - \, + \, - \, + \, - \right) \qquad \text{ and }\qquad K_1 \;=\; \left( + \, - \, - \, + \, + \, - \right) \;.$$
It is straightforward to check that $K_0$ and $K_1$ are \Cpf\ at the $2$-nd stage and
\begin{eqnarray*}
H^2_{dR}(X;\R) &=& \underbrace{\R\left\langle e^{15},\,e^{26},\,e^{35}\right\rangle}_{=\;H^{2\,+}_{K_0}(X;\R)}\,\oplus\,\underbrace{\R\left\langle e^{14},\,e^{23},\,e^{25}+e^{34} \right\rangle}_{=\;H^{2\,-}_{K_0}(X;\R)} \\[10pt]
 &=& \underbrace{\R\left\langle e^{14},\,e^{15},\,e^{23},\,e^{26}\right\rangle}_{=\;H^{2\,+}_{K_1}(X;\R)}\,\oplus\,\underbrace{\R\left\langle e^{35},\,e^{25}+e^{34} \right\rangle}_{=\;H^{2\,-}_{K_1}(X;\R)} \;;
\end{eqnarray*}
therefore
$$ \dim_\R H^{2\,+}_{K_0}(X;\R) \;=\; 3 \qquad \text{ and } \qquad \dim H^{2\,-}_{K_0}(X;\R) \;=\; 3 $$
and
$$ \dim_\R H^{2\,+}_{K_1}(X;\R) \;=\; 4 \qquad \text{ and } \qquad \dim H^{2\,-}_{K_1}(X;\R) \;=\; 2 \;. $$
Instead, for $0<t<1$, one has
$$ H^{2\,+}_{K_t}(X;\R) \;=\; \R\left\langle e^{14},\, e^{15},\, e^{23},\, e^{26}\right\rangle $$
and
$$ H^{2\,-}_{K_t}(X;\R) \;=\; \R\left\langle e^{14},\, e^{23},\, e^{25}+e^{34}\right\rangle \;; $$
it follows that, for $0<t<1$, the \para-complex structure $K_t$ is neither \Cp\ at the $2$-nd stage nor \Cf\ at the $2$-nd stage; moreover, for $0<t<1$, one gets
$$ \dim_\R H^{2\,+}_{K_t}(X;\R) \;=\; 4 \qquad \text{ and } \qquad \dim H^{2\,-}_{K_t}(X;\R) \;=\; 3 \;: $$
in particular, the functions
$$ \left[0,\,1\right] \ni t \mapsto \dim_\R H^{2\,+}_{K_t}(X;\R) \in \N  \qquad \text{ and } \qquad \left[0,\,1\right] \ni t \mapsto \dim_\R H^{2\,-}_{K_t}(X;\R) \in \N  $$
are non-constant and both lower-semi-continuous.
\end{ex}

The previous examples show that the dimension of the \para-complex anti-invariant subgroup of the de Rham cohomology in general is not upper-semi-continuous (it is such in Example \ref{es 2.17}) or lower-semi-continuous (it is such in Example \ref{ex:def-jump-sci}). We end this section with an example showing that also the dimension of the \para-complex invariant subgroup of the de Rham cohomology in general is not lower-semi-continuous (it is such in Example \ref{es 2.17} and in Example \ref{ex:def-jump-sci}).

\begin{ex}\label{ex:def-jump-scs}
{\itshape There exists a curve of \para-complex structures on a $6$-dimensional nilmanifold such that the dimensions of the \para-complex invariant and anti-invariant subgroups of the real second de Rham cohomology group jump (up\-per-se\-mi-con\-ti\-nuou\-sly) along the curve.}\\
Consider the $6$-dimensional nilmanifold
$$ X\;:=\; \left(0,\,0,\,0,\,12,\,13,\,24\right) \;.$$
By Nomizu's theorem (see \cite{nomizu}), it is straightforward to compute
$$ H^2_{dR}(X;\R) \;=\; \R\left\langle e^{14},\, e^{15},\, e^{23},\, e^{26},\, e^{35},\, e^{25}+e^{34} \right\rangle \;.$$
For every $t\in\left[0,\,1\right]$, consider the invariant \para-complex structure
$$
K_t \;:=\;
\left(
\begin{array}{cccc|cc}
 1 & & & & & \\
 & -1 & & & & \\
 & & -1 & & & \\
 & & & 1 & & \\
 \hline
 & & & & \frac{(1-t)^2-t^2}{(1-t)^2+t^2} & \frac{2t(1-t)}{(1-t)^2+t^2} \\
 & & & & \frac{2t(1-t)}{(1-t)^2+t^2} & -\frac{(1-t)^2-t^2}{(1-t)^2+t^2} \\
\end{array}
\right) \;.
$$
For $0\leq t\leq 1$, one checks that
$$ \mathfrak{g}^+_{K_t} \;=\; \R\left\langle e_1,\, e_4,\, (1-t)\,e_5+t\, e_6 \right\rangle \quad \text{ and } \quad \mathfrak{g}^-_{K_t} \;=\; \R\left\langle e_2,\, e_3,\, t\, e_5-(1-t)\, e_6 \right\rangle \;;$$
one can straightforwardly check that the integrability condition of $K_t$ is satisfied for every $t\in\left[0,\,1\right]$. Furthermore, one can prove that $K_t$ is Abelian for every $t\in\left[0,\,1\right]$, hence it is in particular \Cp\ at the $2$-nd stage by Corollary \ref{cor:abelian}.\\
In particular, for $t\in\{0,\,1\}$, one has
$$ K_0 \;=\; \left( + \, - \, - \, + \, + \, - \right) \qquad \text{ and }\qquad K_1 \;=\; \left( + \, - \, - \, + \, - \, + \right) \;.$$
It is straightforward to check that $K_0$ is \Cpf\ at the $2$-nd stage and
$$ H^2_{dR}(X;\R) \;=\; \underbrace{\R\left\langle e^{14},\,e^{15},\,e^{23},\, e^{26}\right\rangle}_{=\;H^{2\,+}_{K_0}(X;\R)}\,\oplus\,\underbrace{\R\left\langle e^{35},\,e^{25}+e^{34} \right\rangle}_{=\;H^{2\,-}_{K_0}(X;\R)} $$
while $K_1$ is \Cp\ at the $2$-nd stage, non-\Cf\ at the $2$-nd stage and
$$ H^2_{dR}(X;\R) \;=\; \underbrace{\R\left\langle e^{14},\,e^{23},\,e^{35} \right\rangle}_{=\;H^{2\,+}_{K_1}(X;\R)}\,\oplus\,\underbrace{\R\left\langle e^{15},\,e^{26} \right\rangle}_{=\;H^{2\,-}_{K_1}(X;\R)} \,\oplus\, \R\left\langle e^{25}+e^{34}\right\rangle \;,$$
where
$$ \R\left\langle e^{25}+e^{34}\right\rangle \,\cap\,\left(H^{2\,+}_{K_1}(X;\R)\oplus H^{2\,-}_{K_1}(X;\R)\right)\;=\;\left\{0\right\} \;; $$
therefore
$$ \dim_\R H^{2\,+}_{K_0}(X;\R) \;=\; 4 \qquad \text{ and } \qquad \dim H^{2\,-}_{K_0}(X;\R) \;=\; 2 $$
and
$$ \dim_\R H^{2\,+}_{K_1}(X;\R) \;=\; 3 \qquad \text{ and } \qquad \dim H^{2\,-}_{K_1}(X;\R) \;=\; 2 \;. $$
Instead, for $0<t<1$, one has
$$ H^{2\,+}_{K_t}(X;\R) \;=\; \R\left\langle e^{14},\, e^{23} \right\rangle $$
and
$$ H^{2\,-}_{K_t}(X;\R) \;=\; \R\left\langle t\, e^{26}+(1-t)\, e^{25}+(1-t)\, e^{34} \right\rangle \;, $$
while
$$ \R\left\langle e^{15},\, e^{35},\, e^{26} \right\rangle \,\cap\, \left(H^{2\,+}_{K_t}(X;\R)\oplus H^{2\,-}_{K_t}(X;\R)\right)\;=\;\left\{0\right\} \;; $$
it follows that, for $0<t<1$, the \para-complex structure $K_t$ is \Cp\ at the $2$-nd stage and non-\Cf\ at the $2$-nd stage; moreover, for $0<t<1$, one gets
$$ \dim_\R H^{2\,+}_{K_t}(X;\R) \;=\; 2 \qquad \text{ and } \qquad \dim H^{2\,-}_{K_t}(X;\R) \;=\; 1 \;: $$
in particular, the functions
$$ \left[0,\,1\right] \ni t \mapsto \dim_\R H^{2\,+}_{K_t}(X;\R) \in \N  \qquad \text{ and } \qquad \left[0,\,1\right] \ni t \mapsto \dim_\R H^{2\,-}_{K_t}(X;\R) \in \N  $$
are non-constant and both upper-semi-continuous.
\end{ex}

We resume the contents of Example \ref{ex:def-jump-sci} and Example \ref{ex:def-jump-scs} in the following proposition.

\begin{prop}
 Let $X$ be a compact manifold and let $\left\{K_t\right\}_{t\in I}$ be a curve of \para-complex structures on $X$, where $I\subseteq\R$. Then, in general, the functions
$$ I\ni t \mapsto \dim_\R H^{2\,+}_{K_t}(X;\R)\in\N \qquad \text{ and } \qquad  I\ni t \mapsto \dim_\R H^{2\,-}_{K_t}(X;\R)\in\N $$
are not upper-semi-continuous or lower-semi-continuous.
\end{prop}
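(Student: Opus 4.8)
The plan is to observe that this proposition merely records the conclusions of the two preceding examples, so the proof consists in reading off the dimension data computed there and testing it against the two semi-continuity notions. First I would recall that, for an integer-valued function $f$ on a subset $I\subseteq\R$, upper-semi-continuity at a point $t_0$ amounts to $\limsup_{t\to t_0}f(t)\leq f(t_0)$, while lower-semi-continuity amounts to $\liminf_{t\to t_0}f(t)\geq f(t_0)$; to refute either property it suffices to exhibit a single point at which the relevant inequality is reversed.

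To refute upper-semi-continuity, I would invoke the curve of Example \ref{ex:def-jump-sci}, for which $\dim_\R H^{2\,+}_{K_t}(X;\R)=3$ at $t=0$ but equals $4$ for every $t\in(0,1)$, so that $\limsup_{t\to 0^+}\dim_\R H^{2\,+}_{K_t}(X;\R)=4>3$, violating upper-semi-continuity at $t=0$; the analogous phenomenon occurs for $\dim_\R H^{2\,-}_{K_t}(X;\R)$ at the endpoint $t=1$, where the value drops from $3$ in the interior to $2$. To refute lower-semi-continuity, I would instead invoke the curve of Example \ref{ex:def-jump-scs}, for which $\dim_\R H^{2\,+}_{K_t}(X;\R)=4$ at $t=0$ but equals $2$ throughout $(0,1)$, so that $\liminf_{t\to 0^+}\dim_\R H^{2\,+}_{K_t}(X;\R)=2<4$, violating lower-semi-continuity at $t=0$; again the anti-invariant dimension behaves the same way, falling from $2$ at the endpoint to $1$ in the interior.

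Placing the two examples side by side, a curve of \para-complex structures may violate upper-semi-continuity of the invariant and anti-invariant dimensions (Example \ref{ex:def-jump-sci}) while another violates lower-semi-continuity (Example \ref{ex:def-jump-scs}), which is precisely the assertion. The only genuine difficulty is bookkeeping rather than mathematics: one must keep track of which endpoint and which of the two inequalities ($\limsup$ versus $\liminf$) each example refutes, bearing in mind that the two curves are deliberately arranged so that the dimensions jump in opposite directions and thereby defeat complementary semi-continuity notions. All the substantive work, namely the construction of the deformation families $\{K_t\}$ and the computation of the dimensions $\dim_\R H^{2\,\pm}_{K_t}(X;\R)$, has already been carried out in the two examples, so no further computation is required here.
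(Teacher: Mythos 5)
Your proposal is correct and coincides with the paper's own argument: the proposition is explicitly stated as a summary of Examples \ref{ex:def-jump-sci} and \ref{ex:def-jump-scs}, and the proof consists precisely in reading off the computed dimensions and noting that the first curve defeats upper-semi-continuity while the second defeats lower-semi-continuity. Your bookkeeping of the values ($3\to4$ and $3\to2$ in the first example, $4\to2$ and $2\to1$ in the second) matches the paper's computations exactly.
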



\begin{thebibliography}{99}
\bibitem{alekseevsky-medori-tomassini}
D.~V. Alekseevsky, C. Medori, A. Tomassini, Homogeneous para-K\"ahler Einstein manifolds (Russian), {\em Uspekhi Mat. Nauk} \textbf{64} (2009), no.~1(385), 3--50; translation in {\em Russian Math. Surveys} \textbf{64} (2009), no.~1, 1--43.

\bibitem{andrada-barberis-dotti-ovando}
A. Andrada, M.~L. Barberis, I.~G. Dotti, G.~P. Ovando, Product structures on four dimensional solvable Lie algebras, {\em Homology Homotopy Appl.} \textbf{7} (2005), no.~1, 9--37.

\bibitem{andrada-salamon}
A. Andrada, S. Salamon, Complex product structures on Lie algebras, {\em Forum Math.} \textbf{17} (2005), no.~2, 261--295.

\bibitem{angella-tomassini}
D. Angella, A. Tomassini, On cohomological decomposition of almost complex manifolds and deformations, {\em J. Symplectic Geom.} \textbf{9} (2011), no.~3, 403--428.

\bibitem{angella-tomassini-2}
D. Angella, A. Tomassini, On the cohomology of almost complex manifolds, to appear in {\em Internat. J. Math.}.

\bibitem{bock}
C. Bock, On Low-Dimensional Solvmanifolds, \texttt{arXiv:0903.2926v4 [math.DG]}.

\bibitem{CMMS04 Spec geom IeII}
V. Cort\'{e}s, C. Mayer, T. Mohaupt, F. Saueressig, Special Geometry of Euclidean Supersymmetry I: 
Vector Multiplets, {\em J. High Energy Phys.}, \textbf{028} (2004), no.~3, 73 pp.

\bibitem{cruceanu-fortuny-gadea}
V. Cruceanu, P. Fortuny, P.~M. Gadea, A survey on paracomplex geometry, {\em Rocky Mountain J. Math.} \textbf{26} (1996), no.~1, 83--115.

\bibitem{derham}
G. de~Rham, \emph{Differentiable manifolds}, Grundlehren der Mathematischen Wissenschaften [Fundamental Principles of Mathematical Sciences], vol. 266, Springer-Verlag, Berlin, 1984, Forms, currents, harmonic forms, Translated from the French by F. R. Smith, With an introduction by S. S. Chern.

\bibitem{draghici-li-zhang}
T. Dr\v{a}ghici, T.-J. Li, W.  Zhang, Symplectic forms and cohomology decomposition of almost complex four-manifolds, {\em Int. Math. Res. Not. IMRN} {\bfseries 2010} (2010), no.~1, 1--17.

\bibitem{draghici-li-zhang-survey}
T. Dr\v{a}ghici, T.-J. Li, W.  Zhang, Geometry of tamed almost complex structures in dimension $4$, \texttt{preprint}.

\bibitem{fino-grantcharov}
A. Fino, G. Grantcharov, On some properties of the Manifolds with Skew-Symmetric Torsion, {\em Adv. Math.} {\bf 189} (2004), no.~2, 439--450.

\bibitem{fino-tomassini}
A. Fino, A. Tomassini, On some cohomological properties of almost complex manifolds, {\em J. Geom. Anal.} \textbf{20} (2010), no.~1, 107--131.

\bibitem{greub-halperin-vanstone}
W. Greub, S. Halperin, R. Vanstone, {\em Connections, curvature, and cohomology. Volume III: Cohomology of principal bundles and homogeneous spaces}, Pure and Applied Mathematics, Vol. \textbf{47-III}, Academic Press [Harcourt Brace Jovanovich, Publishers], New York-London, 1976.

\bibitem{harvey-lawson}
F.~R. Harvey, H.~B. Lawson Jr., Split Special Lagrangian Geometry, \texttt{arXiv:1007.0450v1 [math.DG]}.

\bibitem{hattori}
A. Hattori, Spectral sequence in the de {R}ham cohomology of fibre bundles, {\em J. Fac. Sci. Univ. Tokyo Sect. I} \textbf{8} (1960), no.~1960, 289--331.

\bibitem{kim-mccann-warren}
Y.-H. Kim, R.~J. McCann, M. Warren, Pseudo-Riemannian geometry calibrates optimal transportation, {\em Math. Res. Lett.} \textbf{17} (2010), no.~6, 1183--1197.

\bibitem{kodaira-spencer-3}
K. Kodaira, D.~C. Spencer, On deformations of complex analytic structures. III. Stability theorems for complex structures, {\em Ann. of Math.} (2) \textbf{71} (1960), no.~1, 43--76.

\bibitem{krahe}
M. Krahe, Para-pluriharmonic maps and twistor spaces, in {\em Handbook of pseudo-Riemannian geometry and supersymmetry}, 497--557, IRMA Lect. Math. Theor. Phys., \textbf{16}, Eur. Math. Soc., Z\"urich, 2010.

\bibitem{li-zhang}
T.-J. Li, W. Zhang, Comparing tamed and compatible symplectic cones and cohomological properties of almost complex manifolds, {\em Comm. Anal. Geom.} \textbf{17} (2009), no.~4,  651--684.

\bibitem{malcev}
A.~I. Mal'tsev, On a class of homogeneous spaces, {\em Izvestiya Akad. Nauk. SSSR. Ser. Mat.} \textbf{13} (1949), no.~1, 9--32, English translation in {\em Amer. Math. Soc. Translation: Series 1} \textbf{1951} (1951), no.~39, 193--206.

\bibitem{medori-tomassini}
C. Medori, A. Tomassini, On small deformations of paracomplex manifolds, {\em J. Noncommut. Geom.} \textbf{5} (2011), no.~4, 507--522.

\bibitem{milnor}
J. Milnor, Curvature of left-invariant metrics on Lie groups, {\em Adv. in Math.} {\bf 21} (1976), no.~3, 293--329.

\bibitem{nomizu}
K. Nomizu, On the cohomology of compact homogeneous spaces of nilpotent Lie groups, {\em Ann. of Math.} (2) \textbf{59} (1954), no.~3, 531--538.

\bibitem{rossi}
F.~A. Rossi, On deformations of $\mathbf{D}$-manifolds and CR $\mathbf{D}$-manifolds, to appear in  {\em J. Geom. Phys.} (2011), \texttt{http://dx.doi.org/10.1016/j.geomphys.2011.11.007}.

\bibitem{rossi-2}
F.~A. Rossi, On Ricci-flat $\mathbf{D}$-K\"ahler manifolds, {\em in preparation}.
\end{thebibliography}
\end{document}